\documentclass{article}
\usepackage{authblk}
\usepackage{amsmath}
\usepackage{amssymb}
\usepackage{amsthm}
\usepackage{thmtools}

\newtheorem{theorem}{Theorem}[section]
\newtheorem{lemma}[theorem]{Lemma}
\newtheorem{proposition}[theorem]{Proposition}
\newtheorem{definition}[theorem]{Definition}
\newtheorem{corollary}[theorem]{Corollary}

\newtheorem{question}[theorem]{Question}
\newtheorem*{theorem*}{Theorem}

\DeclareMathOperator{\ord}{ord}

\newcommand{\Q}{\mathbb Q}
\newcommand{\dd}{\delta}
\newcommand{\pa}{\partial}
\newcommand{\Dt}{\Delta}

\newcommand{\GL}{{\hbox{\rm GL}}}

\newcommand{\lclm}{{\rm lclm}}
\newcommand{\gcrd}{{\rm gcrd}}

\newcommand{\bfm} { {\mathbf{m}}}

\newcommand{\bfi} { {\mathbf{i}}}
\newcommand{\bfj} { {\mathbf{j}}}

\newcommand{\calD}{{\mathcal{D}}}

\newcommand{\calN}{{\mathcal{N}}}
\newcommand{\calL}{{\mathcal{L}}}
\newcommand{\calT}{{\mathcal{T}}}
\newcommand{\calU}{{\mathcal{U}}}

\title{The Equivalence Problem for Generalized Airy Operators
\thanks{This work was supported by the Strategic Priority Research Program of Chinese Academy of Sciences under Grant XDA0480503 and XDA0480502, 
the NSFC Grants No. 12471479}}
\author[1,2]{Yichuan Cao}
\author[1,2]{Ruyong Feng}
\author[1,2]{Yunfei Li}
\author[2]{Ruichen Qiu}
\affil[1]{State Key Laboratory of Mathematical Sciences,
Academy of Mathematics and Systems Science, Chinese Academy of Sciences}
\affil[2]{
School of Mathematics, University of Chinese Academy of Sciences}
\date{}

\begin{document}
\maketitle

\begin{abstract}
In this paper, we establish degree obstructions to the equivalence of generalized Airy operators of the same type. As an application, we answer a question posed by Nicholas M. Katz in Inventiones Mathematicae (87, pp. 13–61,1987). The main results of Sections 3 and 4 were obtained through a close interactive collaboration between the authors and the artificial intelligence agent system MechMath Agent Team (MMAT).
\end{abstract}

\section{Introduction}
Let $k$ be an algebraically closed field of characteristic zero, and let $k(x)$ denote the field of rational functions over $k$, equipped with the usual derivation
$
\delta=\frac{d}{dx}.
$
Let
$
R=k(x)[\partial]
$
be the ring of differential operators over $k(x)$, where the multiplication is determined by
\[
\partial f=f\partial+\delta(f), \qquad f\in k(x).
\]
The ring $R$ is a typical example of an Ore ring. We refer the reader to \cite{Ore:theoryofnoncommutativepolynomials} for some of its basic properties.
\begin{definition}
For two nonzero operators $\calL_1,\calL_2\in R$, we say $\calL_1$ is equivalent over $k(x)$ to $\calL_2$, 
denoted by $\calL_1\sim \calL_2$, if there exists a nonzero $\calT \in R$ such that 
\begin{equation}
  \label{EQ:equivalence}
   \gcrd(\calT,\calL_1)=1,\,\, \lclm(\calT, \calL_1)=\calL_2\calT,
\end{equation} 
where $\gcrd$ denotes the greatest common right-hand divisor, and $\lclm$ denotes the least common left-hand multiple.
In this situation, we say that $\calL_1$ is transformed into $\calL_2$ by $\calT$.
\end{definition}
\begin{definition}\label{def:airy_operator}
A generalized Airy operator of type \((n,m)\) is an operator
\[
  \calL=P(\partial)+Q(x),
\]
where \(P,Q\in k[x]\), \(P\) is monic of degree \(n\), \(Q\) is of degree \(m\), and \(Q(0)=0\).
\end{definition}
The generalized Airy operators were systematically studied by Katz in 
\cite{Katz:calculationofdifferentialGaloisgroups,Katz:ExponentialSumsAndDifferentialEquations}. 
In particular, Katz showed in \cite{Katz:calculationofdifferentialGaloisgroups} that generalized Airy operators of type $(n,m)$ 
with $\gcd(n,m)=1$ have large differential Galois groups. Furthermore, he proved that, when $\gcd(n,m)=1$, 
the differential Galois group of a generalized Airy operator is completely determined by 
whether the coefficient of the second highest-order term of $P$ vanishes and whether the operator is self-dual (see Theorem 4.2.7 of 
\cite{Katz:calculationofdifferentialGaloisgroups}). 
To determine whether a generalized Airy operator is self-dual, he posed the following question. 
\begin{question}
\label{Ques:Katz}
Suppose that $\calL_1,\calL_2$ are two generalized Airy operators of type $(n,m)$ with $\gcd(n,m)=1$. 
If $\calL_1\sim \calL_2$, then $\calL_1=\calL_2$.
\end{question}
Following Katz, Saidane in \cite{Saidane:airyoperatorsofsmallorder} studied reducibility, equivalence, and self-duality for generalized
Airy operators of small order, obtaining explicit criteria in several cases of order at
most three. In particular, he provided an affirmative answer of Katz's problem in the case of order three and proved that no generalized Airy
operator of type $(3, m)$ with $m>1$ is self-dual.

In this paper, we shall give an affirmative answer to Katz's problem. 
The remainder of this paper is organized as follows. 
In Section 2, we investigate the formal solutions of generalized Airy operators. 
In Section 3, we derive degree-theoretic necessary conditions for two generalized
Airy operators to be equivalent. Section 4 is devoted to the proofs of the main results.

\section{Formal solutions of generalized Airy operators}
Let $\calU$ be a universal differential extension of $k(x)$. We regard $\calU$ as a left $R$-module by defining
$$
\calL(\xi)=a_n\delta^n(\xi)+\cdots+a_1\delta(\xi)+a_0\xi,
$$
for
$
\calL=a_n\partial^n+\cdots+a_1\partial+a_0\in R.
$
and $\xi\in \calU$.
An element $\xi\in\calU$ is called a solution of $\calL$ if
$
\calL(\xi)=0.
$
The space of solutions of a nonzero operator $\calL$ has dimension $\ord(\calL)$ over the field of constants of $\calU$.
In this section, we investigate the formal solutions of generalized Airy operators at the point $x=\infty$. 

Write $t=1/x$. For each positive integer $r$, let $k((t^{1/r}))$ denote the field of Laurent series in $t^{1/r}$ 
over $k$. By the Newton--Puiseux theorem, the field
\[
  \bigcup_{r\geq 1} k((t^{1/r}))
\]
is an algebraic closure of $k((t))$.
We denote by $\ord_t$ the valuation on $\bigcup_{r=1}^\infty k((t^{1/r}))$, defined by
$$
\ord_t(\xi)=s,
$$
where
$
\xi=a_s t^s+\text{higher order terms},
$
with $a_s\neq 0$ and $s\in\mathbb{Q}$. By convention, we set $\ord_t(0)=\infty$.  
\begin{proposition}
\label{PROP:Puiseuxsolutions}
Let $P\in k[Y]$ be monic of degree $n>0$, and let $Q\in k[x]$ have degree $m>0$ and leading coefficient $b_m$. Put
\[
  e=\frac{n}{\gcd(n,m)},\qquad q=\frac{m}{\gcd(n,m)},\qquad v=t^{1/e}.
\]
If $c_1,\dots,c_n$ are the roots of $W^n+b_m=0$, then there exist unique units $w_{c_i}(v)\in k[[v]]$ with $w_{c_i}(0)=c_i$ such that
\[
  v^{-q}w_{c_1}(v),\dots,v^{-q}w_{c_n}(v)
\]
are precisely the roots of $P(Y)+Q(x)=0$ in $\bigcup_{r=1}^\infty k((t^{1/r}))$.
\end{proposition}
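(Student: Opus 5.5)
The plan is to rescale the unknown so that the equation $P(Y)+Q(x)=0$ becomes a monic polynomial equation over $k[[v]]$ whose reduction modulo $v$ is $W^n+b_m$, and then apply Hensel's lemma. Since $t=1/x$ and $v=t^{1/e}$, we have $x=v^{-e}$. The basic numerical identity is
\[
qn=\frac{mn}{\gcd(n,m)}=em.
\]
Write $P(Y)=Y^n+\sum_{j<n}p_jY^j$ and $Q(x)=\sum_{i\le m}b_ix^i$. Substituting $Y=v^{-q}W$ and multiplying by $v^{qn}=v^{em}$ gives
\[
F(v,W):=v^{qn}\bigl(P(v^{-q}W)+Q(v^{-e})\bigr)=W^n+\sum_{j<n}p_j v^{q(n-j)}W^j+b_m+\sum_{i<m}b_i v^{e(m-i)}.
\]
All exponents of $v$ here are nonnegative, so $F\in k[v][W]\subset k[[v]][W]$ is monic in $W$. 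Moreover $F(0,W)=W^n+b_m$.

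Next I use that $b_m\neq0$ and $\operatorname{char}k=0$. These imply that $W^n+b_m$ has $n$ distinct roots $c_1,\dots,c_n$, all nonzero. Each $c_i$ is therefore a simple root of $F(0,W)$. Hensel's lemma over the complete local ring $k[[v]]$ then gives, for each $i$, a unique $w_{c_i}(v)\in k[[v]]$ with $F(v,w_{c_i})=0$ and $w_{c_i}(0)=c_i$. One can equally construct $w_{c_i}$ coefficient by coefficient, since $\partial_WF(0,c_i)=nc_i^{n-1}\neq0$. Because $c_i\neq0$, each $w_{c_i}$ is a unit. Reversing the substitution shows that $v^{-q}w_{c_i}(v)$ is a root of $P(Y)+Q(x)$ in $k((t^{1/e}))\subset\bigcup_r k((t^{1/r}))$.

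It remains to show these are \emph{all} the roots and that the $w_{c_i}$ are unique as stated. The $n$ elements $v^{-q}w_{c_i}$ are pairwise distinct, since their leading coefficients $c_i$ differ. The polynomial $P(Y)+Q(x)$ has degree $n$ in $Y$ over the field $\bigcup_r k((t^{1/r}))$, so it has at most $n$ roots there. Hence these $n$ elements are precisely its roots. For uniqueness, suppose $v^{-q}w(v)$ is a root with $w\in k[[v]]$ and $w(0)=c_i$. Then $w$ is a root of $F(v,W)$ lifting $c_i$, and the uniqueness part of Hensel's lemma forces $w=w_{c_i}$.

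I expect no real obstacle. The only point requiring care is the bookkeeping that $qn=em$, which guarantees that every term of $F$ has a nonnegative power of $v$ and that the constant term is exactly $b_m$.
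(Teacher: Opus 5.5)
Your proof is correct and follows essentially the same route as the paper: rescale by a power of $v$ so that the equation reduces modulo $v$ to a polynomial with $n$ distinct nonzero simple roots, lift those roots to $k[[v]]$ by Hensel's lemma (the paper's ``formal implicit function theorem''), and conclude by counting roots of a degree-$n$ polynomial over the field $\bigcup_r k((t^{1/r}))$. The only difference is cosmetic. You substitute $Y=v^{-q}W$ directly and multiply by $v^{qn}=v^{em}$, which gives a monic $F\in k[v][W]$ with $F(0,W)=W^n+b_m$; the paper instead passes through $Z=Y^{-1}$, lifts the roots of $W^n+b_m^{-1}$, and then inverts the resulting units, so your version is slightly more direct.
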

\begin{proof}
Write $Z=Y^{-1}$.  Then
$
P(Z^{-1})=Z^{-n}A(Z),
$
where
$$
A(Z)=1+a_{n-1}Z+\cdots+a_0Z^n.
$$
As $t=1/x$, we have that 
$
Q(t^{-1})=b_mt^{-m}B(t),
$ 
where
$$
B(t)=1+\frac{b_{m-1}}{b_m}t+\cdots+\frac{b_0}{b_m}t^m.
$$
Thus \(A(0)=B(0)=1\). The equation
$
P(Y)+Q(x)=0
$
becomes
\[
Z^{-n}A(Z)+b_mt^{-m}B(t)=0,
\]
or equivalently
\begin{equation}\label{EQ:inverse}
 Z^n=-b_m^{-1}t^m\frac{A(Z)}{B(t)}.
\end{equation}
Since all roots of $P(Y)+Q(x)=0$ are nonzero, $\eta$ is a solution of $P(Y)+Q(x)=0$ if and only 
if $\eta^{-1}$ is a root of (\ref{EQ:inverse}).
We seek a solution of (\ref{EQ:inverse}) in the form $Z=v^qw$. 
Substituting $Z=v^qw$ into \eqref{EQ:inverse}, and using
\(qn=me\), gives
\[
v^{qn}w^n
 =-b_m^{-1}v^{em}\frac{A(v^qw)}{B(v^e)}.
\]
Since \(qn=em\), the powers of \(v\) cancel, and we obtain
$
 w^n=-b_m^{-1}\frac{A(v^qw)}{B(v^e)}.
$
Set
\[
F(v,W)
 =W^n+b_m^{-1}\frac{A(v^qW)}{B(v^e)}.
\]
Then $w$ is a solution of $F(v,W)=0$ in $\bigcup_{r=1}^\infty k((t^{1/r}))$. At \(v=0\), we have
\[
F(0,W)=W^n+b_m^{-1}.
\]
Let $\alpha_1,\dots,\alpha_n\in k$ be all solutions of $F(0, W)=0$. Since $\alpha_i\neq 0$, one has that
\[
\frac{\partial F}{\partial W}(0,\alpha_i)=n\alpha_i^{n-1}\neq 0. 
\]
Therefore, by the formal implicit function theorem, for each $\alpha_i$, there exists a unique series
\[
\tilde{w}_{\alpha_i}(v)\in k[[v]],
\qquad \tilde{w}_{\alpha_i}(0)=\alpha_i,
\]
satisfying $F(v, \tilde{w}_{\alpha_i}(v))=0$. In particular, $\tilde{w}_{\alpha_i}(v)$ is a unit in
$k[[v]]$. Hence
$
v^q\tilde{w}_{\alpha_i}(v)
$
is a solution of $F(v,W)=0$ in $k((v))$ and thus $v^{-q}\tilde{w}_{\alpha_i}(v)^{-1}$ is a solution of $P(Y)+Q(x)=0$ in $k((v))$. Set $c_i=\alpha^{-1}$ and $w_{c_i}=\tilde{w}_{\alpha_i}(v)^{-1}$ for each $1\leq i \leq n$. Then $w_{c_i}(v)\in k[[v]]$ and $w_{c_i}(0)=\alpha_i^{-1}$ which is a root of $W^n+b_m=0$.  
Finally, since $c_1,\dots,c_n$ are distinct, $v^{-q}w_{c_1}(v),\dots, v^{-q}w_{c_n}(v)$ are distinct and thus they form all roots of $P(Y)+Q(x)=0$.
\end{proof}

In the following, for $\eta \in \calU$, the expression \(\exp\left(\int \eta\,dx\right)\) denotes a nonzero \(E\in \calU\) satisfying
$
  \dd(E)=\eta E.
$
By convention, we also write $\calU[\partial]$ for the ring of differential operators over $\calU$. 
\begin{lemma}
\label{LM:solution-reduced}
Let $\calL=P(\partial)+Q(x)$ be a generalized Airy operator, $\eta, h\in \calU$ and $E=\exp^{\int \eta dx}$.  Then
\[
  E^{-1}\calL(Eh)=(P(\partial+\eta)+Q(x))(h).
\]
\end{lemma}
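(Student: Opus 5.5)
The identity should reduce to a conjugation statement in $\calU[\partial]$. In $\calU[\partial]$ we will show
\[
E^{-1}\,\partial\, E=\partial+\eta,
\]
meaning that for every $h\in\calU$ we have $E^{-1}\delta(Eh)=\delta(h)+\eta h$. This follows from the Leibniz rule: $\delta(Eh)=\delta(E)h+E\delta(h)=E(\eta h+\delta(h))$, using $\delta(E)=\eta E$. The fact that $E\neq 0$ is invertible in $\calU$ is automatic, since $\calU$ is a field.

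Next we pass to powers by induction on $j$, showing that $E^{-1}\delta^j(Eh)=(\partial+\eta)^j(h)$. Assuming the case $j$, set $g=(\partial+\eta)^j(h)$. Then $\delta^j(Eh)=Eg$, so
\[
\delta^{j+1}(Eh)=\delta(Eg)=E(\partial+\eta)(g)=E(\partial+\eta)^{j+1}(h).
\]
Here $(\partial+\eta)^j$ is understood as an element of $\calU[\partial]$ acting on $\calU$. This action is a module action, since $\calU$ is a left $\calU[\partial]$-module via $\partial\mapsto\delta$, so composing actions agrees with multiplying operators.

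Now write $P(Y)=\sum_j p_jY^j$ with $p_j\in k$. Because the $p_j$ are constants, linearity gives
\[
E^{-1}P(\partial)(Eh)=\sum_j p_jE^{-1}\delta^j(Eh)=P(\partial+\eta)(h).
\]
Finally, $Q(x)$ is multiplication by an element of $k(x)\subset\calU$, and it commutes with multiplication by $E$, so $E^{-1}Q(x)Eh=Q(x)h$. Adding the two parts yields the stated identity.

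There is no real obstacle. The only point needing care is to interpret $P(\partial+\eta)$ as the evaluation of $P$ at the element $\partial+\eta$ of the noncommutative ring $\calU[\partial]$. This evaluation is well defined because the coefficients of $P$ lie in $k$, which is central in $\calU[\partial]$, so substituting into powers causes no ordering ambiguity.
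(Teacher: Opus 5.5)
Your proposal is correct and follows the paper's proof essentially step by step. The paper also derives $\partial E = E(\partial+\eta)$ from the Leibniz rule and $\delta(E)=\eta E$, extends it by induction to $\partial^j E = E(\partial+\eta)^j$, and concludes using linearity over the constant coefficients of $P$ together with the fact that multiplication by $Q(x)$ commutes with multiplication by $E$.
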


\begin{proof}
 Let \(\calD=\partial+\eta\). For $a\in \calU$, we have 
\[
  \partial(Ea)=\dd(Ea)=\dd(E)a+E\dd(a)=\eta Ea+E\dd(a)=E(\eta a+\dd(a))=E\calD(a).
\]
Thus, we obtain
$
  \partial E = E \calD,
$
where $E$ is identified with the multiplication operator by $E$.
It follows by induction on \(j\) that
$
  \partial^jE=E \calD^j
$
and thus $P(\partial)E=EP(\calD)$.  Since $\calL=P(\partial)+Q(x)$, we obtain
\begin{align*}
  E^{-1}\calL(Eh)&=E^{-1}((P(\partial)E)(h)+Q(x)Eh)=E^{-1}(EP(\calD)(h)+EQ(x)h)\\
  &=(P(\calD)+Q(x))(h).
\end{align*}
\end{proof}

\begin{lemma}
\label{LM:transport_recursion}
Let $\calL=P(\partial)+Q(x)$ be a generalized Airy operator of type $(n,m)$ with $mn\neq 0$, 
and let $\eta$ be a solution of $P(Y)+Q(x)=0$ in $\bigcup_{r=1}^\infty k((t^{1/r}))$. Set $e=n/\gcd(n,m)$ and $\tau=(n-1)m/(2n)$. Then there exists a nonzero $h\in k((t^{1/(2e)}))$ with $\ord_t(h)=\tau$ such that 
$$
    (P(\partial + \eta) +Q(x))(h)=0.
$$
\end{lemma}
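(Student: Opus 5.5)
The plan is a WKB-type dominant-balance argument: find the leading behaviour of $h$, then solve for a correction by a coefficientwise recursion in the Puiseux variable.

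\textbf{Step 1 (expand the conjugated operator).} Write $\calD=\partial+\eta$ and expand $P(\calD)$ as an operator in $\partial$ with coefficients in $k((v))$, where $v=t^{1/e}$ as in Proposition~\ref{PROP:Puiseuxsolutions}. Noncommutative Taylor expansion gives
\[
P(\partial+\eta)=P(\eta)+P'(\eta)\partial+\tfrac12 P''(\eta)\,\dd(\eta)+\sum_{j\ge 2}\frac{1}{j!}P^{(j)}(\eta)\partial^j+\calN .
\]
Here $\calN$ collects the remaining terms that involve derivatives of $\eta$. Each such term is a product of some $\dd^{i}(\eta)$ times a coefficient $P^{(l)}(\eta)$ times a power of $\partial$. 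Since $P(\eta)+Q(x)=0$, the zeroth-order term cancels.

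Next I record valuations, using Proposition~\ref{PROP:Puiseuxsolutions}. We have $\ord_t\eta=-m/n$, $\ord_t P^{(l)}(\eta)=-(n-l)m/n$, and $\dd=-t^2\frac{d}{dt}$, so each application of $\dd$ raises $\ord_t$ by $1$ (or kills a constant). In the variable $t$ we have $\partial=-t\theta$ with $\theta=t\frac{d}{dt}$, and $\theta(t^s)=st^s$.

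Dividing the equation by $-tP'(\eta)$, the dominant part is $\theta+c$ with
\[
c=-\tfrac12\,\frac{P''(\eta)\dd(\eta)}{tP'(\eta)} .
\]
A short computation shows that $c$ has constant term $(n-1)m/(2n)\cdot(-1)$ up to sign, up to terms of positive valuation. Every other term becomes $a_{j}\theta^j$ with $\ord_t a_j>0$. For example, the $\partial^j$ term with $j\ge2$ has relative size $(t/\eta)^{j-1}$, of valuation $(j-1)(1+m/n)>0$. The $\calN$-terms carry extra factors $\dd^{i}(\eta)/\eta$ of positive valuation.

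\textbf{Step 2 (factor out the leading behaviour).} Set $h=t^{\tau}g$. Since $\theta(t^\tau g)=t^\tau(\theta+\tau)g$, the choice $\tau=(n-1)m/(2n)$ is exactly the one that cancels the constant part of $c$. Equivalently, $h\approx P'(\eta)^{-1/2}$. The equation then becomes
\[
\theta g=\sum_{j\ge0} b_j\theta^j g ,
\]
where the $b_j$ lie in $k((t^{1/(2e)}))$ and satisfy $\ord_t b_j\ge \epsilon>0$ for a uniform $\epsilon$. All exponents lie in $\frac{1}{2e}\mathbb Z$, because $t^\tau\in k((t^{1/(2e)}))$. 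Only finitely many $b_j$ are nonzero, since $P$ has degree $n$.

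\textbf{Step 3 (solve recursively).} Seek $g=1+\sum_{s>0} g_s t^s$ with $s\in\frac1{2e}\mathbb Z_{>0}$. Comparing coefficients of $t^s$ gives
\[
s\,g_s=\sum_j\sum_{s'<s}(b_j)_{s-s'}\,s'^{\,j}g_{s'} .
\]
Since $s\ne0$, this determines every $g_s$ uniquely by induction on $s$, which is well-founded because the exponents are discrete and bounded below. At $s=0$ the equation reads $0\cdot g_0=0$, consistent with $g_0=1$. Then $h=t^\tau g$ is nonzero, lies in $k((t^{1/(2e)}))$, and has $\ord_t h=\tau$.

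\textbf{Main obstacle.} The delicate step is Step 1: verifying that after normalization the order-zero coefficient is exactly the constant $-\tau$ plus terms of positive valuation, and that every term in $\calN$ has strictly positive relative valuation. I would first check the $\partial^0$ coefficient. The only candidates for a valuation-zero contribution there are $\frac12P''(\eta)\dd(\eta)$ and the terms $\dd^i(\eta)P^{(l)}(\eta)$ with $l\ge2$. For the latter, each extra derivative or extra $\eta$-factor costs a positive valuation, which I would confirm by a degree count using $\ord_t\eta=-m/n<0$.
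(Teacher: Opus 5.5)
Your proposal is correct and is essentially the paper's argument. The paper's dominant part $\calN_1=\sum_{i=0}^{n-1}\eta^{n-1-i}\partial\eta^{i}=n\eta^{n-1}\partial+\binom{n}{2}\eta^{n-2}\dd(\eta)$ is exactly the leading part of your $P'(\eta)\partial+\tfrac12P''(\eta)\dd(\eta)$; its indicial coefficient vanishes precisely at $\rho=\tau$, and the paper's coefficientwise recursion, with nonzero factor $n(i+1)/e$, is your recursion with factor $s$. The only differences are bookkeeping (you normal-order, pass to the Euler operator $\theta$ and substitute $h=t^\tau g$, while the paper tracks orders of unordered monomials applied to $t^\rho$), and the Step 1 check you flag does go through: a term $P^{(l)}(\eta)\prod_a(\dd^a\eta)^{m_a}\partial^j$ with $l=j+\sum_a(a+1)m_a$ has valuation at least $(j-1+\sum_a a\,m_a)(1+m/n)$ relative to $tP'(\eta)$, which is positive except for your two dominant terms and the cancelled $P(\eta)$.
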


\begin{proof}
We first determine the order of $(\partial+\eta)^k(t^\rho)$. Expanding $(\partial+\eta)^k$, each monomial is of the form
$$
\bfm_{\bfi,\bfj}=\eta^{i_1}\partial^{j_1}\eta^{i_2}\partial^{j_2}\cdots \eta^{i_s}\partial^{j_s},
$$
where $i_\ell,j_\ell$ are nonnegative integers satisfying
$$
i_1+\cdots+i_s+j_1+\cdots+j_s=k.
$$
Since $P(\eta)+Q(x)=0$, the terms involving only powers of $\eta$ cancel out. Therefore, it suffices to consider monomials for which at least one of $j_1,\dots,j_s$ is nonzero.
As $\ord_t(\eta)=-m/n$ and $\delta(t^\rho)=-\rho t^{\rho+1}$, one sees that if 
$\bfm_{\bfi,\bfj}(t^\rho)\neq 0$
 then
\begin{equation}
  \label{EQ:ordmnomials}
  \ord_t (\bfm_{\bfi,\bfj}(t^\rho))=\rho-\left(i_1+\dots+i_s\right)\frac{m}{n}+(j_1+\dots+j_s).
\end{equation}
Therefore, 
$\ord_t(\bfm_{\bfi,\bfj}(t^\rho))\geq \rho-(k-1)m/n+1$
 for all monomials in $(\partial+\eta)^k-\eta^k$. Thus for the monomials $\bfm_{\bfi,\bfj}$ in $P(\partial+\eta)+Q(x)$, one has that 
 $$
     \ord_t(\bfm_{\bfi,\bfj}(t^\rho))\geq \rho-\frac{(n-1)m}{n}+1
 $$
 and the equality holds only if the monomials are of the form 
$$
     \eta^{n-1-i}\partial \eta^{i},\,\,0\leq i \leq n-1.
$$ 
Set 
$$
 \calN_1=\sum_{i=0}^{n-1} \eta^{n-1-i}\partial \eta^{i} \,\,\text{and}\,\, \calN_2=P(\partial+\eta)+Q(x)-\calN_1.
$$
Then $P(\partial+\eta) + Q(x)=\calN_1+\calN_2$. 
Assume that $\bfm_{\bfi,\bfj}$ is a monomial appearing in $\calN_2$.  If $\sum_{\ell=1}^s(i_\ell+j_\ell)=n$ then $\sum_{\ell=1}^s j_\ell\geq 2$ and from (\ref{EQ:ordmnomials}), we have
$$
     \ord_t(\bfm_{\bfi,\bfj}(t^\rho))\geq \rho-\frac{(n-2)m}{n}+2>\rho-\frac{(n-1)m}{n}+1+\frac{m}{n}.
$$
If $\sum_{\ell=1}^s(i_\ell+j_\ell)\leq n-1$, then $\sum_{\ell=1}^s j_\ell\geq 1$ and from (\ref{EQ:ordmnomials}) again, we have
\begin{align*}
  \ord_t(\bfm_{\bfi,\bfj}(t^\rho))&\geq \rho-\frac{(\sum_{\ell=1}^s i_\ell)m}{n}+\left(\sum_{\ell=1}^s j_\ell\right)\\
  &\geq \rho - \frac{(n-2)m}{n}+1 = \rho-\frac{(n-1)m}{n}+1+\frac{m}{n}.
\end{align*}
In summary, we have that if $\bfm_{\bfi,\bfj}(t^\rho)\neq 0$ then
\begin{equation}
\label{EQ:minimalorder}
   \ord_t(\bfm_{\bfi,\bfj}(t^\rho))=\begin{cases} 
  \rho-\frac{(n-1)m}{n}+1 & \text{$\bfm_{\bfi,\bfj}$ in $\calN_1$}\\
   \geq \rho-\frac{(n-1)m}{n}+1+\frac{m}{n} & \text{$\bfm_{\bfi,\bfj}$ in $\calN_2$}.

      \end{cases}
\end{equation}
Now let us compute the leading coefficient of $(\calN_1+\calN_2)(t^\rho)$. We first investigate the leading coefficient of $\bfm_{\bfi,\bfj}(t^\rho)$ with minimal order.  
By Proposition~\ref{PROP:Puiseuxsolutions}, 
$$\eta=c_0t^{-\frac{m}{n}}+c_1t^{-\frac{m}{n}+\frac{1}{e}}+\text{higher order terms}$$ 
for some nonzero $c_0\in k$. One has that for the monomials $ \eta^{n-1-i}\partial \eta^{i}$,
\begin{align*}
   (\eta^{n-1-i}\partial \eta^{i})(t^\rho)=\eta^{n-1-i}\delta(\eta^{i}t^\rho)&=c_0^{n-1}\left(\rho-\frac{i m}{n}\right)t^{\rho-\frac{(n-1)m}{n}+1}\\ \notag
&+a_{i}t^{\rho-\frac{(n-1)m}{n}+\frac{1}{e}+1}+\text{higher order terms}.
\end{align*}
Summing over $i$ together with (\ref{EQ:minimalorder}) gives 
\begin{align}
\label{EQ:leadingcoefficient}
    (\calN_1+\calN_2)(t^\rho)&=c_0^{n-1}\left(n\rho-\frac{(n-1)m}{2}\right)t^{\rho-\frac{(n-1)m}{n}+1}\\ \notag
&+\tilde{a}t^{\rho-\frac{(n-1)m}{n}+\frac{1}{e}+1}+\text{higher order terms}.
\end{align}
Let $h_0=t^{\tau}$. Then by (\ref{EQ:leadingcoefficient}) with $\rho=\tau$, we have 
$$
  \ord_t((\calN_1+\calN_2)(h_0))\geq -\tau+\frac{1}{e}+1.
$$
We seek a solution 
$$
   h=t^\tau(1+b_1t^{\frac{1}{e}}+b_2t^{\frac{2}{e}}+\dots)
$$
step by step. Suppose that we already have
$$
    h_i=t^{\tau}(1+b_1t^{\frac{1}{e}}+\dots+b_it^{\frac{i}{e}})
$$
satisfying
$$
  \ord_t((\calN_1+\calN_2)(h_i))\geq -\tau+\frac{i+1}{e}+1.
$$
We shall determine $b_{i+1}$. Write 
$$
    (\calN_1+\calN_2)(h_{i})=\alpha_{i}t^{-\tau+\frac{i+1}{e}+1}+\alpha_{i+1} t^{-\tau+\frac{i+2}{e}+1}+\text{higher order terms}.
$$
We have that  
\begin{align*}
  (\calN_1+\calN_2)&(h_i+b_{i+1}t^{\tau+\frac{i+1}{e}})=(\calN_1+\calN_2)(b_{i+1}t^{\tau+\frac{i+1}{e}})+(\calN_1+\calN_2)(h_i)\\
&=b_{i+1}c_0^{n-1}\left(\frac{n(i+1)}{e}\right)t^{-\tau+\frac{i+1}{e}+1}
+\tilde{b}t^{-\tau+\frac{i+2}{e}+1}+\text{higer order terms}\\
&+\alpha_{i}t^{-\tau+\frac{i+1}{e}+1}+\alpha_{i+1} t^{-\tau+\frac{i+2}{e}+1}+\text{higher order terms}.
\end{align*}
Set 
$$
 b_{i+1}=-\frac{\alpha_{i}e}{n(i+1) c_0^{n-1}}.
$$
Then 
$$
   \ord_t((\calN_1+\calN_2)(h_{i+1}))\geq -\tau+\frac{i+2}{e}+1.
$$
Therefore, $ \ord_t((\calN_1+\calN_2)(h))\geq \infty$ and so $(\calN_1+\calN_2)(h)=0$.
\end{proof}

\begin{lemma}
\label{LM:linearlyindependent}
Let $\calL=P(\partial)+Q(x)$ be a generalized Airy operator of type $(n,m)$ with $nm\neq 0$. Set $e=n/\gcd(n,m)$. Let $\eta_1,\dots,\eta_n\in k((t^{1/e}))$ be all roots of $P(Y)+Q(x)=0$ and $h_i\in k((t^{1/(2e)}))$ be a nonzero solution of $(P(\partial+\eta_i)+Q(x))(y)=0$. Then $\exp^{\int \eta_1 dx}h_1,\dots, \exp^{\int \eta_n dx}h_n$ are linearly independent over $k$.
\end{lemma}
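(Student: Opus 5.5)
We prove linear independence by the standard argument for exponential solutions with distinct exponential parts. Put $E_i=\exp\left(\int\eta_i\,dx\right)$ and $y_i=E_ih_i$. By Lemma~\ref{LM:solution-reduced} each $y_i$ is a solution of $\calL$. Suppose there is a nontrivial relation $\sum_{i\in S}\lambda_i y_i=0$ with all $\lambda_i\neq 0$, and choose it with $|S|$ minimal. Since $h_i\ne 0$ and $E_i\ne 0$, we have $|S|\ge 2$. The idea is to differentiate a normalized version of this relation and obtain a shorter one.

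Fix $i_0\in S$ and divide the relation by $y_{i_0}$. For $i\in S$ set $g_i=h_i/h_{i_0}$ and $u_i=E_i/E_{i_0}$, so that $\sum_{i\in S}\lambda_i u_i g_i=0$ with $u_{i_0}g_{i_0}=1$. We have $\delta(u_i)=(\eta_i-\eta_{i_0})u_i$, and $g_i$ lies in $k((t^{1/(2e)}))$, which is closed under $\delta=-t^2\frac{d}{dt}$. Applying $\delta$ gives
\[
\sum_{i\in S\setminus\{i_0\}}\lambda_i u_i\left(\delta(g_i)+(\eta_i-\eta_{i_0})g_i\right)=0.
\]
The coefficients here lie in the field $K=k((t^{1/(2e)}))$, not in $k$. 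So the minimality argument has to be carried out over $K$ rather than over $k$.

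To set this up, we show that the $E_i$ (equivalently the $y_i$) are linearly independent over $K$; independence over $k$ then follows because $k\subset K$. Take a relation $\sum_{i\in S}f_iE_i=0$ with $f_i\in K^\times$ and $|S|$ minimal, and normalize so that $f_{i_0}=1$. Differentiating $\sum_i f_iE_i=0$ and subtracting $\eta_{i_0}$ times the original relation gives
\[
\sum_{i\ne i_0}\left(\delta(f_i)+(\eta_i-\eta_{i_0})f_i\right)E_i=0,
\]
and here the $i_0$ term cancels because $f_{i_0}=1$. This is a relation with fewer terms, so by minimality every coefficient vanishes: $\delta(f_i)=(\eta_{i_0}-\eta_i)f_i$ for each $i\ne i_0$ in $S$, and there is at least one such $i$.

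The key step is to show that $\delta(f)=(\eta_{i_0}-\eta_i)f$ has no nonzero solution $f\in K$. By Proposition~\ref{PROP:Puiseuxsolutions}, $\eta_j=c_jt^{-m/n}+\cdots$, and since the $c_j$ are distinct roots of $W^n+b_m=0$, the difference $\eta_{i_0}-\eta_i$ has order exactly $-m/n<0$. For nonzero $f\in K$ with $\ord_t f=s$, we get $\ord_t\delta(f)\ge s+1$, while $\ord_t\big((\eta_{i_0}-\eta_i)f\big)=s-m/n<s+1$. This is a contradiction, so no such relation exists.

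The main obstacle is this final valuation comparison. It relies on $m>0$ and on the leading coefficients $c_j$ being distinct, and both are supplied by Proposition~\ref{PROP:Puiseuxsolutions}. One must also be careful that $\delta$ raises $\ord_t$ by at least $1$ on Puiseux series: $\delta(t^{s})=-st^{s+1}$, and the term vanishes when $s=0$.
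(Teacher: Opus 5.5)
Your argument is correct, but it takes a different route from the paper.

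The paper proves the lemma with a Wronskian computation. Using the recursion $B_{r+1,i}=\delta(B_{r,i})+\alpha_iB_{r,i}$, it shows that $\delta^r(z_i)/z_i=c_i^r t^{-rm/n}+\cdots$. After factoring out $\prod_i z_i$ and the powers $t^{-(r-1)m/n}$, the Wronskian matrix has the Vandermonde matrix of $c_1,\dots,c_n$ as its constant term, so $W(z_1,\dots,z_n)\neq 0$.

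You instead run the classical minimal-relation argument for exponentials with distinct exponential parts, carried out over $K=k((t^{1/(2e)}))$. You normalize and differentiate a shortest $K$-relation among the $E_i$. This forces $\delta(f)=(\eta_{i_0}-\eta_i)f$ for some nonzero $f\in K$, which the comparison $\ord_t\delta(f)\ge \ord_t f+1>\ord_t f-m/n$ rules out.

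What your route buys:
\begin{itemize}
\item It proves the stronger fact that the $E_i$ are linearly independent over $K$. The factors $h_i$ are then absorbed for free: any nonzero $h_i\in K$ works, and the equation they satisfy plays no role.
\item It needs only $\ord_t(\eta_i-\eta_j)<1$ for $i\neq j$. The Wronskian computation instead uses that all $\eta_i$ have the same order $-m/n$ with pairwise distinct leading coefficients.
\item It avoids the Bell-type recursion and the determinant.
\end{itemize}

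The paper's route, in exchange, gives the explicit leading term, and hence the $t$-adic order, of the Wronskian.

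Your opening attempt at a minimal relation over $k$ is fully superseded by the argument over $K$ and can be dropped.
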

\begin{proof}
Set $z_i=\exp^{\int \eta_i dx} h_i$ for all $1\leq i \leq n$ and write $v=t^{1/e}$. 
By Proposition~\ref{PROP:Puiseuxsolutions}, the roots of $P(Y)+Q(x)=0$ are of the form
\[
\eta_i=v^{-q}w_{c_i}(v),
\]
where $q=m/\gcd(n,m)$,$w_{c_i}(v)\in k[[v]]$ and $w_{c_i}(0)=c_i$, $c_1,\dots,c_n$ are the $n$ distinct roots of 
$
W^n+b_m=0
$
and $b_m$ is the leading coefficient of $Q(x)$.
Consequently,
\[
\eta_i=c_i t^{-m/n}+\text{higher order terms}.
\]

Set
\[
\alpha_i=\frac{\delta (z_i)}{z_i}
=\eta_i+\frac{\delta (h_i)}{h_i}.
\]
For every nonzero Puiseux Laurent series $h_i$, one has
\[
\ord_t\left(\frac{\delta (h_i)}{h_i}\right)\geq 1,
\]
because $\delta=d/dx=-t^2d/dt$. Since $\ord_t(\eta_i)=-m/n<0$, it
follows that
\begin{equation}\label{eq:alpha-leading}
\alpha_i=c_i t^{-m/n}+	\text{higher order terms}.
\end{equation}
For $r\geq 0$, define
\[
B_{r,i}=\frac{\delta^r (z_i)}{z_i}.
\]
We claim that
\begin{equation}\label{eq:Bell-leading}
B_{r,i}=c_i^r t^{-rm/n}
+\text{higher order terms}.
\end{equation}
Indeed, $B_{0,i}=1$, and the logarithmic-derivative recurrence gives
\[
B_{r+1,i}=\delta(B_{r,i})+\alpha_i B_{r,i}.
\]
Assuming \eqref{eq:Bell-leading} for $r$, differentiation raises the
$t$-order of every nonzero Puiseux monomial by $1$, whereas multiplication
by $\alpha_i$ lowers the order by $m/n$. Hence the unique lowest-order
term in $B_{r+1,i}$ comes from the product of the leading terms of
$\alpha_i$ and $B_{r,i}$, and is
\[
c_i^{r+1}t^{-(r+1)m/n}.
\]
This proves \eqref{eq:Bell-leading} by induction.

Now consider the Wronskian
\[
W(z_1,\ldots,z_n)
=\det\bigl(\delta^{r-1}(z_i)\bigr)_{1\leq r,i\leq n}.
\]
Factoring $z_i$ from the $i$th column, we obtain
\[
W(z_1,\ldots,z_n)
=\left(\prod_{i=1}^n z_i\right)
\det\bigl(B_{r-1,i}\bigr)_{1\leq r,i\leq n}.
\]
By \eqref{eq:Bell-leading}, after factoring
$t^{-(r-1)m/n}$ from the $r$th row, the remaining matrix has constant
term
\[
\begin{pmatrix}
1&1&\cdots&1\\
c_1&c_2&\cdots&c_n\\
\vdots&\vdots&&\vdots\\
c_1^{n-1}&c_2^{n-1}&\cdots&c_n^{n-1}
\end{pmatrix}.
\]
Its determinant is the Vandermonde determinant
\[
\prod_{1\leq i<j\leq n}(c_j-c_i),
\]
which is nonzero because the $c_i$ are pairwise distinct. Therefore
\[
W(z_1,\ldots,z_n)\neq 0.
\]
Since the constant field is $k$, the nonvanishing of the Wronskian implies
that $z_1,\ldots,z_n$ are linearly independent over $k$.
\end{proof}

\begin{theorem}
\label{THM:basisofsolutions}
Let $\calL=P(\partial)+Q(x)$ be a generalized Airy operator of type $(n,m)$ with $nm\neq 0$. Set $e=n/\gcd(n,m)$. Let $\eta_1,\dots,\eta_n\in k((t^{1/e}))$ be all roots of $P(Y)+Q(x)=0$ and $h_i\in k((t^{1/(2e)}))$ be a nonzero solution of $(P(\partial+\eta_i)+Q(x))(y)=0$. Then $\exp^{\int \eta_1 dx}h_1,\dots, \exp^{\int \eta_n dx}h_n$ form a basis of the solution space of $\calL(y)=0$.
\end{theorem}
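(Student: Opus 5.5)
The argument has two parts: each $z_i=\exp^{\int \eta_i dx}h_i$ is a solution of $\calL$, and there are $n=\ord(\calL)$ of them, linearly independent over the constants. Linear independence is already given by Lemma~\ref{LM:linearlyindependent}, so the real content is membership in the solution space, together with a dimension count.

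\textbf{Step 1: each $z_i$ is a solution.} Set $E_i=\exp^{\int\eta_i dx}\in\calU$, so $\delta(E_i)=\eta_iE_i$, and $E_i\neq 0$ by the convention fixed before Lemma~\ref{LM:solution-reduced}. That lemma gives
\[
E_i^{-1}\calL(E_ih_i)=(P(\partial+\eta_i)+Q(x))(h_i).
\]
The right-hand side is $0$ because $h_i$ solves the reduced equation. Multiplying by $E_i$ yields $\calL(z_i)=0$.

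To make this legitimate, I regard $k((t^{1/(2e)}))$ as a differential subfield of $\calU$ with derivation $\delta=-t^2\,d/dt$. This is possible because $\calU$ is universal and $k((t^{1/(2e)}))$ is a differential field extension of $k(x)$. I also need the existence of the $h_i$ to be meaningful; Lemma~\ref{LM:transport_recursion} supplies it, though the theorem as stated takes the $h_i$ as given.

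\textbf{Step 2: dimension count.} The order of $\calL$ is $n$, since $P$ is monic of degree $n$ in $\partial$ and $Q(x)$ has order $0$. The solution space of $\calL$ in $\calU$ therefore has dimension $n$ over the constant field of $\calU$. By Lemma~\ref{LM:linearlyindependent}, the $n$ solutions $z_1,\dots,z_n$ are linearly independent, so they form a basis.

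\textbf{Main obstacle: matching the constant fields.} Lemma~\ref{LM:linearlyindependent} asserts independence over $k$, while the basis claim concerns the constant field $C$ of $\calU$. I would resolve this by recalling that the lemma's proof actually shows the Wronskian $W(z_1,\dots,z_n)$ is nonzero. The Wronskian criterion in the differential field $\calU$ then gives linear independence over $C$ directly.

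If one instead takes the universal extension to have constant field $k$, which is the usual convention since $k$ is algebraically closed, no issue arises at all. In either reading, $n$ independent solutions of an order-$n$ operator span its full solution space, which completes the proof.
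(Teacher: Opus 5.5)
Your proposal is correct and follows the paper's proof. It shows each $z_i$ is a solution via Lemma~\ref{LM:solution-reduced}, with the existence of the $h_i$ from Lemma~\ref{LM:transport_recursion}, and then gets a basis from Lemma~\ref{LM:linearlyindependent} together with the dimension count $\ord(\calL)=n$. Your remark that the nonvanishing Wronskian gives independence over the full constant field of $\calU$, not just over $k$, tightens a point the paper glosses over.
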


\begin{proof}
By Proposition~\ref{PROP:Puiseuxsolutions} and Lemmas~\ref{LM:solution-reduced} and ~\ref{LM:transport_recursion}
, $\exp^{\int \eta_i dx} h_i$ is a solution of $\calL(y)=0$. The theorem then follows 
from Lemma~\ref{LM:linearlyindependent}.
\end{proof}

\section{Degree obstructions}
In this section, we derive necessary conditions for two generalized Airy operators
of the same type to be equivalent.
\begin{lemma}
\label{LM:logderivative}
Let $w$ be a nonconstant Puiseux Laurent series.  Then 
$$
 \ord_t\!\left(\frac{\dd (w)}{w}\right)\ge 1.
$$ 
Moreover, 
$\ord_t\!\left(\frac{\dd w}{w}\right)=1$ if and only if $\ord_t(w)\neq 0$.

\end{lemma}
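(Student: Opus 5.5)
The plan is a direct computation with the leading term of $w$, using $\dd=-t^2\frac{d}{dt}$. Write $w=a_s t^s+\sum_{r>s} a_r t^r$ with $a_s\neq 0$, where $s=\ord_t(w)\in\Q$ and the exponents lie in $\frac1N\mathbb Z$ for some $N$. Differentiating termwise gives
\[
\dd(w)=-\sum_{r\ge s} r a_r t^{r+1}.
\]
Every exponent appearing in $\dd(w)$ is therefore at least $s+1$. Since $w$ is nonconstant, some $a_r$ with $r\neq 0$ is nonzero, so $\dd(w)\neq 0$ and the quotient has a well-defined finite order. Because $\ord_t$ is a valuation, $\ord_t(\dd(w)/w)=\ord_t(\dd(w))-s\ge 1$, which gives the inequality.

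For the equivalence I will split into two cases according to whether $s=0$.

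\textbf{Case $s\neq 0$.} The coefficient of $t^{s+1}$ in $\dd(w)$ is $-s a_s$, which is nonzero because $k$ has characteristic zero. Hence $\ord_t(\dd(w))=s+1$, and the quotient has order exactly $1$. Equivalently, $\dd(w)/w=-s\,t+\text{higher order terms}$.

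\textbf{Case $s=0$.} The term $a_0t^0$ is killed by $\dd$. Let $r_0>0$ be the smallest positive exponent with $a_{r_0}\neq 0$; it exists because $w$ is nonconstant. Then $\dd(w)=-r_0a_{r_0}t^{r_0+1}+\cdots$, so $\ord_t(\dd(w))=r_0+1$. Since $\ord_t(w)=0$, we get $\ord_t(\dd(w)/w)=1+r_0>1$. This gives the converse direction.

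There is no real obstacle here. The only points to state explicitly are that characteristic zero guarantees $r a_r\neq 0$ for $r\neq 0$, and that nonconstancy guarantees $\dd(w)\neq 0$, so the order of $\dd(w)/w$ is finite.
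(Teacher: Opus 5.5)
Your proof is correct and uses essentially the same leading-term computation with $\delta=-t^2\frac{d}{dt}$ as the paper, which writes $w=ct^\lambda(1+U)$ and splits $\delta(w)/w=-\lambda t+\delta(U)/(1+U)$ instead of differentiating $w$ termwise. Your cases $s\neq 0$ and $s=0$ correspond exactly to the paper's cases $\lambda\neq 0$ and $\lambda=0$, where in the second case nonconstancy forces $U\neq 0$.
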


\begin{proof}
Write
\[
  w=ct^\lambda(1+U),
\]
with $c\in k\setminus \{0\}, \lambda\in\Q$, and either $U=0$ or
$\ord_t(U)>0$.  Since \(\dd=-t^2d/dt\),
\[
  \frac{\dd (w)}{w}
  =
  -\lambda t+\frac{\dd (U)}{1+U}.
\]
The factor $1+U$ has order $0$.  If $U\neq 0$ and
$\rho=\ord_t(U)>0$, then 
$\ord_t(\dd (U))=\rho+1>1$; for $U=0$, $\ord_t(U)=\infty$.  Thus, if 
$\lambda\neq 0$, the term $-\lambda t$ cannot cancel and
$\ord_t(\dd(w)/w)=1$.  If $\lambda=0$ and $w\notin k$, then
$U\neq 0$, and $\ord_t(\dd(w)/w)>1$.  Because $\ord_t(w)=\lambda$, the
equivalence follows.
\end{proof}

\begin{lemma}
  \label{LM:order-polynomialdifference}
Let $P(Y) \in k[Y]$ be a monic polynomial of degree $n$, and $w, \Dt \in k((t^{1/e}))$ be two Puiseux series. 
If $\ord_t(w) < 0$ and $\ord_t(\Dt) > \ord_t(w)$, then  
$$P(w+\Dt)-P(w) = nw^{n-1}\Dt + M,$$
where $\ord_t(M)>(n-1)\ord_t(w)+\ord_t(\Delta)$.
\end{lemma}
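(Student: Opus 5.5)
We expand $P(w+\Dt)-P(w)$ coefficient by coefficient and bound each remaining term by valuation. Write $P(Y)=\sum_{j=0}^n a_jY^j$ with $a_n=1$, and set $\omega=\ord_t(w)<0$ and $d=\ord_t(\Dt)>\omega$. We may assume $\Dt\neq 0$: if $\Dt=0$ both sides vanish with $M=0$, and $\ord_t(M)=\infty$ trivially satisfies the bound. By the binomial theorem,
\[
P(w+\Dt)-P(w)=\sum_{j=1}^n a_j\sum_{i=1}^{j}\binom{j}{i}w^{j-i}\Dt^{i}.
\]
The term with $j=n$, $i=1$ is exactly $nw^{n-1}\Dt$. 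We define $M$ to be the sum of all other terms and show that each of them has order strictly greater than $(n-1)\omega+d$.

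The other terms fall into two types.

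\emph{Top coefficient, $j=n$, $i\ge 2$.} Here
\[
\ord_t(w^{n-i}\Dt^i)=(n-i)\omega+id=(n-1)\omega+d+(i-1)(d-\omega).
\]
This exceeds $(n-1)\omega+d$ because $i\ge 2$ and $d-\omega>0$. This is the only place the hypothesis $d>\omega$ is used.

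\emph{Lower coefficients, $j\le n-1$, $1\le i\le j$.} Since $a_j\in k$, we have $\ord_t(a_j w^{j-i}\Dt^i)\ge (j-i)\omega+id$. We compare this with $(n-1)\omega+d$. The difference is
\[
(j-i)\omega+id-(n-1)\omega-d=(i-1)(d-\omega)+(j-n)\omega.
\]
The first summand is $\ge 0$. The second is positive because $j-n<0$ and $\omega<0$. So the strict inequality again holds, and here the hypothesis $\omega<0$ is what drives it.

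Finally, $M$ is a finite sum of Puiseux series, each of order strictly greater than $(n-1)\omega+d$. Hence $\ord_t(M)>(n-1)\omega+d$ by the ultrametric property of $\ord_t$, which proves the claim. There is no genuine obstacle: the argument is elementary valuation bookkeeping, and the only point needing care is using $\omega<0$ for the lower-degree coefficients and $d>\omega$ for the higher powers of $\Dt$.
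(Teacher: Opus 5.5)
Your proof is correct and is essentially the paper's argument: expand binomially, isolate $nw^{n-1}\Delta$, and observe that every other term $w^{j-i}\Delta^i$ (with $j<n$ or $i>1$) has order exceeding $(n-1)\ord_t(w)+\ord_t(\Delta)$ by $(n-j)(-\ord_t(w))+(i-1)(\ord_t(\Delta)-\ord_t(w))>0$. One small quibble: when $\Delta=0$ the strict bound would read $\infty>\infty$, which is false, so rather than claiming it holds trivially you should note that the lemma is only meaningful, and is only used, for $\Delta\neq 0$.
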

\begin{proof}
Let $P = \sum_{j=0}^n p_j Y^j$. We have 
\[
  P(w+\Dt)-P(w)
  =\sum_{j=0}^np_{j}\sum_{k=1}^j\binom jkw^{j-k}\Dt^k.
\]
The term $nw^{n-1}\Dt$ has order of $(n-1)\ord_t(w)+\ord_t(\Delta)$.
While, the term $w^{j-k}\Dt^k$ with $j<n$ or $k>1$ has order of 
$$ 
 (j-k)\ord_t(w) + k\ord_t(\Dt) = j \ord_t(w) + k(\ord_t(\Dt)-\ord_t(w))>\ord_t(w^{n-1}\Dt).
$$  
\end{proof}

\begin{definition}
  \label{DEF:contact_order}
For
\[
  z=h\exp\left(\int \eta\,dx\right),\qquad \eta,h\in k((t^{1/e})),
\]
and \(\calT\in R\) with \(\calT(z)\ne0\), the logarithmic contact order  of $z$ with respect to $\calT$ is defined to be
\[
  \mu_{z,\calT}=
  \ord_t\left(\frac{\dd(\calT(z))}{\calT(z)}-\frac{\dd z}{z}\right).
\]
\end{definition}
Let $u$ be a differential indeterminate. We also need to introduce the Bell differential polynomials $B_j(u)$. 
Set $B_0(u)=1$. For $j\geq 0$, set
$$
    B_{j+1}(u)=\delta(B_j(u))+uB_j(u).
$$
Set $C_j(u)=B_j(u)-u^j$ for each $j\geq 0$. Note that $C_0=C_1=0$.
\begin{lemma}
  \label{LM:Bellformula}
Suppose that $\calL=a_n\partial^n+a_{n-1}\partial^{n-1}+\dots+a_0\in R$.
Then 
$$
   \frac{\calL(z)}{z}=\sum_{i=0}^n a_i B_i(\gamma)
$$
where $\gamma=\frac{\delta(z)}{z}$.
\end{lemma}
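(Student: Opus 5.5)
The identity follows once we know that $\delta^i(z)=B_i(\gamma)\,z$ for every $i\ge 0$, where $\gamma=\delta(z)/z$ and $B_i(\gamma)$ is the Bell polynomial evaluated at $u=\gamma$. The plan is to prove this by induction on $i$, then sum over the coefficients of $\calL$. Throughout, $z$ is a nonzero element of $\calU$ (so $\gamma$ makes sense), and evaluating a differential polynomial at $u=\gamma$ commutes with $\delta$. This holds because $\delta$ on $k\{u\}$ is the derivation extending $\delta$ on the coefficients with $\delta(u^{(j)})=u^{(j+1)}$, and the substitution $u\mapsto \gamma$ is a differential homomorphism.

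\emph{Base case.} For $i=0$ we have $B_0=1$, so $\delta^0(z)=z=B_0(\gamma)z$.

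\emph{Inductive step.} Assume $\delta^i(z)=B_i(\gamma)z$. Differentiating with the Leibniz rule gives
\[
\delta^{i+1}(z)=\delta(B_i(\gamma))\,z+B_i(\gamma)\,\delta(z).
\]
Substituting $\delta(z)=\gamma z$ yields
\[
\delta^{i+1}(z)=\bigl(\delta(B_i(\gamma))+\gamma B_i(\gamma)\bigr)z.
\]
The factor in parentheses is $B_{i+1}(\gamma)$ by the defining recursion $B_{j+1}(u)=\delta(B_j(u))+uB_j(u)$, specialized at $u=\gamma$.

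\emph{Conclusion.} By the definition of the $R$-module action on $\calU$, $\calL(z)=\sum_i a_i\delta^i(z)$. Applying the inductive formula term by term gives $\calL(z)=\sum_i a_iB_i(\gamma)z$. Dividing by $z\neq0$ gives the stated identity.

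The only point needing care is the compatibility of specialization $u\mapsto\gamma$ with the derivation, i.e.\ $\delta(B_i(u))|_{u=\gamma}=\delta(B_i(\gamma))$. This follows from the universal property of the differential polynomial ring, and it is the only substantive justification the argument requires.
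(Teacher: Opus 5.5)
Your proof is correct and follows the paper's argument: both establish $\delta^j(z)=B_j(\gamma)\,z$ by induction on $j$, using the Leibniz rule and the recursion $B_{j+1}(u)=\delta(B_j(u))+uB_j(u)$, and then sum against the coefficients $a_i$. Your explicit remark that the specialization $u\mapsto\gamma$ commutes with $\delta$ spells out a step the paper uses without comment.
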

\begin{proof}
  It suffices to show that $\frac{\delta^j(z)}{z}=B_j(\gamma)$ for all $0\leq j \leq n$.
  If $j=0$ then $B_0(\gamma)=1=\delta^0(z)/z$. Suppose that $\ell\geq 0$ and 
  the conclusion holds for $j=\ell$. Then 
  $$
    \frac{\delta^{\ell+1}(z)}{z}=\delta\left(\frac{\delta^\ell(z)}{z}\right)
    +\frac{\delta(z)}{z}\frac{\delta^\ell(z)}{z}=\delta(B_\ell(\gamma))+\gamma B_\ell(\gamma)=B_{\ell+1}(\gamma).
  $$
\end{proof}
\begin{lemma}
  \label{LM:orderofBell1}
  Suppose that $w\in k((t^{1/e}))$ with $\ord_t(w)<1$. Then for $j\geq 0$
  $$
     \ord_t(C_j(w))>j\ord_t(w).
  $$
\end{lemma}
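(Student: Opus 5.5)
The plan is to prove the slightly stronger, quantitative bound
\[
  \ord_t(C_j(w))\ \ge\ (j-1)\ord_t(w)+1\qquad (j\ge 1)
\]
by induction on $j$, and then deduce the statement from the hypothesis $\ord_t(w)<1$. Write $a=\ord_t(w)$. If $w=0$, then $C_j(w)=0$ for all $j$ and there is nothing to prove, so I assume $w\neq 0$. The case $j=0$ is trivial, since $C_0=0$ has order $\infty$. For $j\ge 1$ the stronger bound suffices, because $(j-1)a+1>ja$ is exactly the inequality $1>a$.

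The basic tool is that $\dd=-t^2\,d/dt$ raises $t$-order by at least one on Puiseux series: $\ord_t(\dd(f))\ge \ord_t(f)+1$ for every $f\in k((t^{1/e}))$. This holds with equality on monomials $t^\lambda$ with $\lambda\neq0$, and $\dd$ kills constants. I also use $\ord_t(fg)=\ord_t(f)+\ord_t(g)$ and the ultrametric inequality for sums.

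Next I derive a recursion for $C_j$. Substituting $B_j=w^j+C_j$ into $B_{j+1}=\dd(B_j)+wB_j$ gives
\[
  C_{j+1}(w)=\dd(w^j)+\dd(C_j(w))+w\,C_j(w).
\]
For the base case $j=1$, we have $C_1=0$, which trivially satisfies the bound.

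For the inductive step, assume $\ord_t(C_j)\ge (j-1)a+1$ and bound the three terms of the recursion separately:
\begin{itemize}
\item $\ord_t(\dd(w^j))\ge ja+1$;
\item $\ord_t(\dd(C_j))\ge (j-1)a+2$, which is greater than $ja+1$ since $a<1$;
\item $\ord_t(wC_j)\ge a+(j-1)a+1=ja+1$.
\end{itemize}
Hence $\ord_t(C_{j+1})\ge ja+1=((j+1)-1)a+1$, which closes the induction.

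There is no real obstacle in this argument. The only point needing care is to use the hypothesis $a<1$ twice: once to absorb the $\dd(C_j)$ term in the inductive step, and once to pass from the weak bound to the strict inequality $\ord_t(C_j(w))>ja$.
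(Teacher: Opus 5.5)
Your argument is correct and is essentially the paper's proof: the same recursion $C_{j+1}(w)=\dd(w^j)+\dd(C_j(w))+wC_j(w)$, the same three-term estimate, and induction on $j$, with $\ord_t(w)<1$ used to absorb the $\dd(C_j(w))$ term. The only difference is that you carry the sharper invariant $\ord_t(C_j(w))\ge (j-1)\ord_t(w)+1$ and pass to the strict inequality at the end, whereas the paper inducts on the strict inequality directly. Also, $w=0$ is already excluded by the hypothesis $\ord_t(w)<1$, so your separate remark about that case is unnecessary.
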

\begin{proof}
  The conclusion is obvious when $j=0$ and $j=1$. Assume that $\ell\geq 1$ and 
  the conclusion holds for $j=\ell$. Then 
  \begin{align*}
     C_{\ell+1}(w)&=B_{\ell+1}(w)-w^{\ell+1}=\delta(B_\ell(w))+wB_\ell(w)-w^{\ell+1}\\
     &=\delta(C_\ell(w))+\delta(w^\ell)+wC_\ell(w).
  \end{align*}
  By the induction hypothesis, $\ord_t(C_\ell(w))>\ell\ord_t(w)$. Hence 
  $$
    \ord_t(\delta(C_\ell(w)))\geq \ord_t(C_\ell(w))+1>(\ell+1)\ord_t(w)
  $$
  and 
  $$
    \ord_t(w C_\ell(w))=\ord_t(w)+\ord_t(C_\ell(w))>(\ell+1)\ord_t(w).
  $$
  The lemma then follows from the fact that  
  $$
     \ord_t(\delta(w^\ell))\geq \ell\ord_t(w)+1>(\ell+1)\ord_t(w).
  $$
\end{proof}
\begin{lemma}
  \label{LM:orderofBell2}
  Suppose that $w_1,w_2\in k((t^{1/e}))$. Suppose that 
  $\ord_t(w_1)<\ord_t(w_2)<\infty$ and $\ord_t(w_1)<0$. Then for $j\geq 0$
  $$
     \ord_t(C_j(w_1+w_2)-C_j(w_1))> (j-1)\ord_t(w_1)+\ord_t(w_2).
  $$
\end{lemma}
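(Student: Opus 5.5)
We argue by induction on $j$, following the scheme of Lemma~\ref{LM:orderofBell1}. Write $a=\ord_t(w_1)<0$ and $b=\ord_t(w_2)$, so $a<b<\infty$, and set $D_j=C_j(w_1+w_2)-C_j(w_1)$. The target bound is $\ord_t(D_j)>(j-1)a+b$. For $j=0,1$ we have $D_j=0$, so the bound holds trivially.

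For the inductive step, use the recursion derived in the proof of Lemma~\ref{LM:orderofBell1}: for any $w$,
$$C_{\ell+1}(w)=\delta(C_\ell(w))+\delta(w^\ell)+wC_\ell(w).$$
Subtracting the instances $w=w_1+w_2$ and $w=w_1$ gives
$$D_{\ell+1}=\delta(D_\ell)+\delta\bigl((w_1+w_2)^\ell-w_1^\ell\bigr)+(w_1+w_2)D_\ell+w_2C_\ell(w_1).$$
We show that each of the four terms has order $>\ell a+b$.

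\begin{itemize}
\item $\delta(D_\ell)$: by induction $\ord_t(D_\ell)>(\ell-1)a+b$, and $\delta$ raises order by at least $1$. Since $a<0<1$, we get $\ord_t(\delta(D_\ell))>(\ell-1)a+b+1>\ell a+b$.
\item $(w_1+w_2)D_\ell$: here $\ord_t(w_1+w_2)=a$, so this term has order $>a+(\ell-1)a+b=\ell a+b$.
\item $w_2C_\ell(w_1)$: Lemma~\ref{LM:orderofBell1} applies because $a<1$, giving $\ord_t(C_\ell(w_1))>\ell a$. Hence this term has order $>\ell a+b$.
\item $\delta\bigl((w_1+w_2)^\ell-w_1^\ell\bigr)$: by Lemma~\ref{LM:order-polynomialdifference} with $P=Y^\ell$ (valid for $\ell\ge1$; when $\ell=0$ the difference is $0$), the difference has order $\ge(\ell-1)a+b$. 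Applying $\delta$ adds at least $1$, giving order $\ge(\ell-1)a+b+1>\ell a+b$, again because $a<1$.
\end{itemize}

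The only delicate point is making sure every term beats the target strictly, since the induction must use the strict hypothesis. The terms that carry no factor $w_1$ or $w_2$ rely on the gain of at least $1$ from $\delta$ exceeding the drop $a<0$; note that $\delta$ of a nonzero constant is $0$, so the bound holds even when the expression is constant. The term $w_2C_\ell(w_1)$ is exactly where Lemma~\ref{LM:orderofBell1} is needed.
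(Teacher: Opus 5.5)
Your proof is correct and matches the paper's: the same induction on $j$ with base cases $j=0,1$, the same four-term decomposition of $C_{\ell+1}(w_1+w_2)-C_{\ell+1}(w_1)$, and the same tools for each term (Lemma~\ref{LM:order-polynomialdifference}, the inductive hypothesis, and Lemma~\ref{LM:orderofBell1}). You are slightly more careful than the paper at two points: you handle $\ell=0$ in Lemma~\ref{LM:order-polynomialdifference} separately, and you note that $\delta$ kills constants, so its order-raising bound still holds.
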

\begin{proof}
We will prove by induction.When $j=0$ or $j=1$, $C_j(u)=0$ and the conclusion is obvious.
Assume that $\ell>0$ and the conclusion holds for $j=\ell$. Then
$$
   \ord_t(C_{\ell}(w_1+w_2)-C_{\ell}(w_1))> (\ell-1)\ord_t(w_1)+\ord_t(w_2).
$$
Direct calculation implies that
\[C_{\ell+1}(u) = \dd(u^{\ell})+\dd(C_{\ell}(u))+uC_{\ell}(u).\]
Thus we have the difference

\begin{align*}
  C_{\ell+1}(w_1+w_2) - C_{\ell+1}(w_1)
  &= \underbrace{\dd \left((w_1+w_2)^{\ell}-w_1^{\ell}\right)}_{A_1}
   + \underbrace{\dd \left(C_\ell(w_1+w_2) - C_\ell(w_1)\right)}_{A_2} \\
  &\quad + \underbrace{(w_1+w_2)\left(C_\ell(w_1+w_2)-C_\ell(w_1)\right)}_{A_3}
   + \underbrace{ w_2 \, C_\ell(w_1)}_{A_4}.
\end{align*}

It suffices to show that $\ord_t(A_i)>\ell\ord_t(w_1)+\ord_t(w_2)$ for all $1\leq i \leq 4$.
By Lemma~\ref{LM:order-polynomialdifference} 
$$
\ord_t(A_1)\geq (\ell-1)\ord_t(w_1)+\ord_t(w_2)+1>\ell\ord_t(w_1)+\ord_t(w_2),
$$
 Since $\dd$ raises the order of every nonzero term at least by 1, by the inductive hypothesis,  
$$\ord_t(A_2)>(\ell-1)\ord_t(w_1) + \ord_t(w_2)+1>\ell\ord_t(w_1)+\ord_t(w_2).$$
Since $\ord_t(w_1+w_2) = \ord_t(w_1)$, by the induction hypothesis,
$$
\ord_t(A_3)>\ord_t(w_1) + (\ell-1)\ord_t(w_1) + \ord_t(w_2) = \ell\ord_t(w_1) + \ord_t(w_2).
$$
By Lemma~\ref{LM:orderofBell1}, 
$$
 \ord_t(A_4)=\ord_t(w_2)+\ord_t(C_\ell(w_1))>\ell\ord_t(w_1)+\ord_t(w_2).
$$
\end{proof}

\begin{lemma}
  \label{LM:minimalorderinL2}
  Suppose that $\calL_1=P_1(\partial)+Q_1(x)$ and $\calL_2=P_2(\partial)+Q_2(x)$
  are two generalized Airy operators of type $(n,m)$ with $nm\neq 0$. Assume that 
  $z=\exp^{\int \eta dx} h$ with $\eta, h\in k((t^{1/e}))$ satisfying that
  $\calL_1(z)=0$ and $P_1(\eta)+Q_1(x)=0$. Let $\calT\in R$ satisfy $\calT(z)/z \notin k$, and set $\alpha=\frac{\delta(z)}{z}, \beta=\frac{\delta(\calT(z))}{\calT(z)}$. 
  Then the following assertions hold
  \begin{enumerate}
  \item [(1)] $\ord_t(P_2(\beta)-P_2(\alpha))=(n-1)\ord_t(\alpha)+\ord_t(\beta-\alpha).$
    \item [(2)]
    $\frac{\calL_2(\calT(z))}{\calT(z)}=(Q_2-Q_1)
 +(P_2(\alpha)-P_1(\alpha))
 +\bigl(P_2(\beta)-P_2(\alpha)\bigr)+M
  $
  \item [(3)]$\ord_t(M)>
     \min
     \{\ord_t(P_2(\alpha)-P_1(\alpha)), \ord_t(Q_2-Q_1),
     \ord_t(P_2(\beta)-P_2(\alpha))\}.
  $
  \end{enumerate}
\end{lemma}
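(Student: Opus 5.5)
The plan is to work entirely with logarithmic derivatives, using the Bell formula (Lemma~\ref{LM:Bellformula}) to express both $\calL_1(z)/z$ and $\calL_2(\calT(z))/\calT(z)$ as polynomial expressions plus Bell corrections $C_j$. The estimates will then come from Lemmas~\ref{LM:logderivative}, \ref{LM:order-polynomialdifference}, \ref{LM:orderofBell1} and \ref{LM:orderofBell2}.

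\textbf{Preliminary orders.} Since $\alpha=\eta+\dd(h)/h$, Lemma~\ref{LM:logderivative} gives $\ord_t(\dd(h)/h)\ge 1$. Proposition~\ref{PROP:Puiseuxsolutions} gives $\ord_t(\eta)=-m/n$. Hence $\ord_t(\alpha)=-m/n<0$.

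Next put $f=\calT(z)/z$. By Lemma~\ref{LM:Bellformula}, $f=\sum_i t_iB_i(\alpha)$, where $t_i$ are the coefficients of $\calT$, so $f$ is a Puiseux Laurent series. By hypothesis $f\notin k$. Since $\beta-\alpha=\dd(f)/f$, the series $\beta-\alpha$ is nonzero, and Lemma~\ref{LM:logderivative} gives $\ord_t(\beta-\alpha)\ge 1>\ord_t(\alpha)$.

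\textbf{Assertion (1).} Apply Lemma~\ref{LM:order-polynomialdifference} to $P_2$ with $w=\alpha$ and $\Dt=\beta-\alpha$. The term $n\alpha^{n-1}(\beta-\alpha)$ has order exactly $(n-1)\ord_t(\alpha)+\ord_t(\beta-\alpha)$. The remainder has strictly larger order, so (1) follows.

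\textbf{Assertion (2).} Write $P_\ell(Y)=\sum_{i=0}^n p_{\ell,i}Y^i$ for $\ell=1,2$. Lemma~\ref{LM:Bellformula} applied to $\calT(z)$ and to $z$ gives
\[
\frac{\calL_2(\calT(z))}{\calT(z)}=P_2(\beta)+\sum_i p_{2,i}C_i(\beta)+Q_2,\qquad 0=\frac{\calL_1(z)}{z}=P_1(\alpha)+\sum_i p_{1,i}C_i(\alpha)+Q_1.
\]
Subtracting the second identity from the first yields (2), with
\[
M=\sum_{i=0}^n p_{2,i}\bigl(C_i(\beta)-C_i(\alpha)\bigr)+\sum_{i=0}^{n-1}(p_{2,i}-p_{1,i})C_i(\alpha).
\]
The $i=n$ term of the second sum is absent because both $P_1$ and $P_2$ are monic.

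\textbf{Assertion (3).} This is the main step: each of the two sums must be compared with the matching term in the minimum.

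For the first sum, apply Lemma~\ref{LM:orderofBell2} with $w_1=\alpha$ and $w_2=\beta-\alpha$; its hypotheses were verified above. It gives
\[
\ord_t\bigl(C_i(\beta)-C_i(\alpha)\bigr)>(i-1)\ord_t(\alpha)+\ord_t(\beta-\alpha).
\]
Since $\ord_t(\alpha)<0$ and $i\le n$, the right-hand side is at least $(n-1)\ord_t(\alpha)+\ord_t(\beta-\alpha)$. By (1) this equals $\ord_t(P_2(\beta)-P_2(\alpha))$.

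For the second sum, put $d_i=p_{2,i}-p_{1,i}$. If every $d_i$ vanishes, the sum is $0$. Otherwise let $i_0$ be the largest index with $d_{i_0}\ne0$. The monomials $d_i\alpha^i$ have pairwise distinct orders $i\ord_t(\alpha)$, and the smallest of these is attained at $i=i_0$ because $\ord_t(\alpha)<0$. Hence $\ord_t(P_2(\alpha)-P_1(\alpha))=i_0\ord_t(\alpha)$. Lemma~\ref{LM:orderofBell1} applies since $\ord_t(\alpha)<1$. For each $i\le i_0$ with $d_i\ne0$ it gives $\ord_t(d_iC_i(\alpha))>i\ord_t(\alpha)\ge i_0\ord_t(\alpha)$.

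Combining the two bounds gives (3). The term $\ord_t(Q_2-Q_1)$ in the minimum is not needed for the estimate.

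The only delicate points are confirming that $f$ is a genuine Puiseux series, so that Lemma~\ref{LM:logderivative} applies, and confirming that $\beta\ne\alpha$. Both follow from the Bell formula together with the hypothesis $\calT(z)/z\notin k$. If $h$ lies only in $k((t^{1/(2e)}))$, the same arguments go through verbatim in that field.
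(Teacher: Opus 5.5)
Your proposal is correct and follows the paper's proof essentially step for step: the preliminary orders come from Lemma~\ref{LM:logderivative}, (1) from Lemma~\ref{LM:order-polynomialdifference}, (2) from subtracting the two Bell-formula identities, and (3) from splitting $M$ into the $p_{2,i}\bigl(C_i(\beta)-C_i(\alpha)\bigr)$ part, handled by Lemma~\ref{LM:orderofBell2}, and the $(p_{2,i}-p_{1,i})C_i(\alpha)$ part, handled by Lemma~\ref{LM:orderofBell1}. Your explicit checks fill in points the paper leaves implicit: that $\calT(z)/z$ is a genuine Puiseux series, that $\beta\neq\alpha$ (needed for the hypothesis $\ord_t(w_2)<\infty$ in Lemma~\ref{LM:orderofBell2}), and the definition of $i_0=\deg(P_2-P_1)$, which the paper uses as an undefined $r$.
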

\begin{proof}
Set $\Delta=\beta-\alpha$.
Since $\Delta=\delta(\calT(z)/z)/(\calT(z)/z)$ and $\calT(z)/z\notin k$,  Lemma~\ref{LM:logderivative} gives
$
  \ord_t(\Delta)\geq 1.
$
On the other hand,
\[
  \alpha=\eta+\frac{\delta (h)}{h}.
\]
By Proposition~\ref{PROP:Puiseuxsolutions} and 
Lemma~\ref{LM:logderivative}, $\ord_t(\eta)=-m/n<0$ and
$\ord_t(\delta(h)/h)\geq 1$. Hence we have
$
  \ord_t(\alpha)=-\frac{m}{n}<0.
$
Consequently,
$$
  \ord_t(\Delta)>\ord_t(\alpha).
$$
(1) Because $P_2$ is monic of degree $n$ and $\beta=\alpha+\Delta$, by Lemma~\ref{LM:order-polynomialdifference},
$$
   P_2(\beta)-P_2(\alpha)=P_2'(\alpha)\Delta+\text{higher order terms}.
$$
Therefore,
\[
  \ord_t(P_2(\beta)-P_2(\alpha))
  =\ord_t(P_2'(\alpha)\Delta)=(n-1)\ord_t(\alpha)+\ord_t(\Delta).
\]

(2) Write
$
  P_i(Y)=\sum_{j=0}^n p_{i,j}Y^j, i=1,2.
$
By Lemma~\ref{LM:Bellformula}, we have 
\[
  0=\frac{\calL_1(z)}{z}
   =Q_1+\sum_{j=0}^n p_{1,j}B_j(\alpha)=Q_1+P_1(\alpha)+\sum_{j=0}^n p_{1,j}C_j(\alpha)
\]
and
\[
  \frac{\calL_2(\calT(z))}{\calT(z)}
   =Q_2+\sum_{j=0}^n p_{2,j}B_j(\beta)=Q_2+P_2(\beta)+\sum_{j=0}^n p_{2,j}C_j(\beta).
\]
Subtracting the first identity from the second and using
$\beta=\alpha+\Delta$, we obtain
\[
\begin{aligned}
  \frac{\calL_2(\calT(z))}{\calT(z)}
  ={}&(Q_2-Q_1)+(P_2(\alpha)-P_1(\alpha))
     +\bigl(P_2(\beta)-P_2(\alpha)\bigr)\\
   &+\sum_{j=0}^n(p_{2,j}-p_{1,j})C_j(\alpha)
     +\sum_{j=0}^n p_{2,j}
       \bigl(C_j(\beta)-C_j(\alpha)\bigr).
\end{aligned}
\]
Thus we may take
\[
  M=M_1+M_2,
\]
where
\[
  M_1=\sum_{j=0}^n(p_{2,j}-p_{1,j})C_j(\alpha)\,\,\text{and}\,\,
  M_2=\sum_{j=0}^n p_{2,j}
       \bigl(C_j(\beta)-C_j(\alpha)\bigr).
\]
This proves (2).

(3) By Lemma~\ref{LM:orderofBell2}, for every nonzero summand of $M_2$,
\[
\begin{aligned}
  \ord_t\bigl(C_j(\beta)-C_j(\alpha)\bigr)
  &>(j-1)\ord_t(\alpha)+\ord_t(\Delta)\geq (n-1)\ord_t(\alpha)+\ord_t(\Delta),
\end{aligned}
\]
because $j\leq n$ and $\ord_t(\alpha)<0$. Together with (1), this yields
\begin{equation}
  \label{EQ:M2}
  \ord_t(M_2)>
  \ord_t\bigl(P_2(\beta)-P_2(\alpha)\bigr)
\end{equation}

Suppose that $P_1=P_2$. Then $M_1=0$. Hence
$$
  \ord_t(M)=\ord_t(M_2)>\ord_t\bigl(P_2(\beta)-P_2(\alpha)\bigr)
$$
and the desired inequality follows. 

Suppose that $P_1\neq P_2$. Because
$\ord_t(\alpha)<0$,
$$
  \ord_t(P_2(\alpha)-P_1(\alpha))=r\ord_t(\alpha).
$$
On the other hand, by Lemma~\ref{LM:orderofBell1}, for every nonzero summand of $M_1$, 
\[
\begin{aligned}
  \ord_t\bigl((p_{2,j}-p_{1,j})C_j(\alpha)\bigr)
  >j\ord_t(\alpha)
  \geq r\ord_t(\alpha)
   =\ord_t(P_2(\alpha)-P_1(\alpha)),
\end{aligned}
\]
because
$\ord_t(\alpha)<0$ and $j\leq r$. Hence
\begin{equation}
  \label{EQ:M1}
  \ord_t(M_1)> \ord_t(P_2(\alpha)-P_1(\alpha)).
\end{equation}

From (\ref{EQ:M1}) and (\ref{EQ:M2}), and the ultrametric inequality,
\[
\begin{aligned}
  \ord_t(M)
  &\geq \min\{\ord_t(M_1),\ord_t(M_2)\}\\
  &>\min\Bigl\{
      \ord_t\bigl((P_2-P_1)(\alpha)\bigr),
      \ord_t\bigl(P_2(\beta)-P_2(\alpha)\bigr)
     \Bigr\}.
\end{aligned}
\]
Since adjoining the additional number $\ord_t(Q_2-Q_1)$ can only decrease
this minimum, we conclude that
\[
\begin{aligned}
  \ord_t(M)>
  \min\bigl\{&\ord_t\bigl((P_2-P_1)(\alpha)\bigr),
  \ord_t(Q_2-Q_1),
  \ord_t\bigl(P_2(\beta)-P_2(\alpha)\bigr)\bigr\}.
\end{aligned}
\]
This proves the case $P_1\neq P_2$. Therefore (3) holds. 
\end{proof}

In the following, by convention, we set $\deg(0)=-\infty$.
\begin{theorem}
  \label{THM:degreeobstruction}
Let
$
  \calL_i=P_i(\partial)+Q_i(x), i=1,2 
$
be two generalized Airy operators of type $(n,m)$ with $nm\neq 0$.
Let $z=\exp\left(\int \eta\,dx\right)h$, where $h\neq 0$ and
$\eta,h\in k((t^{1/e}))$, satisfy
$
  P_1(\eta)+Q_1(x)=0
$
and $\calL_1(z)=0$.
Let $\calT\in k(x)[\partial]$ satisfy
$
  \frac{\calT(z)}{z}\notin k
$
and $\calL_2(\calT(z))=0.$ Assume further that $\calL_1\neq \calL_2$. 
Denote $r=\deg(P_1-P_2)$ and $s=\deg(Q_1-Q_2)$. Then at least one of 
\[
  ns=rm,\qquad n(s+\mu_{z,\calT})=(n-1)m,\qquad
  n\mu_{z,\calT}=(n-1-r)m
\]
holds. Moreover, if $P_1=P_2$ then $n(s+\mu_{z,\calT})=(n-1)m$;
  if $Q_1=Q_2$ then $n\mu_{z,\calT}=(n-1-r)m$.
\end{theorem}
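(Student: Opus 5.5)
Since $\calL_2(\calT(z))=0$, the left side of Lemma~\ref{LM:minimalorderinL2}(2) vanishes, so
\[
0=(Q_2-Q_1)+(P_2(\alpha)-P_1(\alpha))+\bigl(P_2(\beta)-P_2(\alpha)\bigr)+M .
\]
The plan is to compute the $t$-orders of the three displayed terms and then use Lemma~\ref{LM:minimalorderinL2}(3). That part says $M$ has strictly larger order than the minimum of the three. The ultrametric principle then forces the minimum to be attained by at least two of the three terms (among those that are nonzero). Each such coincidence of orders gives one of the three linear relations in the statement.

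\textbf{Computing the three orders.} Here $\alpha=\delta(z)/z$ and $\beta=\delta(\calT(z))/\calT(z)$.
\begin{enumerate}
\item Since $\deg(Q_2-Q_1)=s$ and $t=1/x$, we have $\ord_t(Q_2-Q_1)=-s$, with $s=-\infty$ (order $\infty$) when $Q_1=Q_2$.
\item As in the proof of Lemma~\ref{LM:minimalorderinL2}, $\ord_t(\alpha)=-m/n<0$. Hence the leading power of $P_2-P_1$ dominates, and $\ord_t(P_2(\alpha)-P_1(\alpha))=-rm/n$, which is $\infty$ if $P_1=P_2$.
\item By Lemma~\ref{LM:minimalorderinL2}(1), $\ord_t(P_2(\beta)-P_2(\alpha))=-(n-1)m/n+\mu_{z,\calT}$, since $\mu_{z,\calT}=\ord_t(\beta-\alpha)$ by Definition~\ref{DEF:contact_order}. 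This term is nonzero and its order is finite: $\calT(z)/z\notin k$, so Lemma~\ref{LM:logderivative} gives $\ord_t(\beta-\alpha)\ge1<\infty$.
\end{enumerate}

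\textbf{Case analysis.} Because $\calL_1\ne\calL_2$, at least one of the first two terms is nonzero, so the minimum of the three orders is finite. If only one term attained this minimum, the sum could not be zero, since $\ord_t(M)$ is strictly larger. So two of the three orders coincide.
\begin{itemize}
\item Orders 1 and 2 equal: $-s=-rm/n$, i.e.\ $ns=rm$.
\item Orders 1 and 3 equal: $-s=-(n-1)m/n+\mu_{z,\calT}$, i.e.\ $n(s+\mu_{z,\calT})=(n-1)m$.
\item Orders 2 and 3 equal: $-rm/n=-(n-1)m/n+\mu_{z,\calT}$, i.e.\ $n\mu_{z,\calT}=(n-1-r)m$.
\end{itemize}
For the "moreover" part: if $P_1=P_2$, the second term vanishes, and $Q_1\ne Q_2$ because $\calL_1\ne\calL_2$. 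The remaining two terms must then both attain the minimum, which forces the second relation. Symmetrically, $Q_1=Q_2$ forces the third relation.

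\textbf{Main obstacle.} The delicate step is justifying that the minimum is attained at least twice. Lemma~\ref{LM:minimalorderinL2}(3) bounds $\ord_t(M)$ strictly by the minimum over all three terms, including vanishing ones with order $\infty$. One must check that this minimum is finite, which holds because term 3 is always nonzero. With that in hand, the standard fact that a sum of elements of a valued field with a unique minimal-order summand is nonzero gives the contradiction. A minor point is that $\alpha,\beta$ may live in a larger Puiseux field, e.g.\ with $t^{1/(2e)}$. The valuation arguments are unchanged there.
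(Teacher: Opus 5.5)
Your proposal is correct and is essentially the paper's proof. You set $\calL_2(\calT(z))=0$ in Lemma~\ref{LM:minimalorderinL2}(2), compute the three $t$-orders $-s$, $-rm/n$ and $-(n-1)m/n+\mu_{z,\calT}$, and use part (3) to force two of them to coincide. Your explicit check that the minimum is finite, because $P_2(\beta)-P_2(\alpha)\neq 0$, makes precise a step the paper leaves implicit.
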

\begin{proof}
  Set $\alpha=\frac{\delta(z)}{z},\beta=\frac{\delta(\calT(z))}{\calT(z)}$ and 
  $\Delta=\beta-\alpha$. Then 
  $$
     \ord_t(\alpha)=\ord_t(\eta)=-\frac{m}{n}\,\,\text{and}\,\,\ord_t(\Delta)=\mu_{z,\calT}.
  $$
  We first determine the orders of $Q_2-Q_1, P_2(\alpha)-P_1(\alpha)$
  and $P_2(\beta)-P_2(\alpha)$. We have 
  $$\ord_t(Q_2-Q_1)=s\ord_t(x)=-s$$
and 
$$
  \ord_t(P_2(\alpha)-P_1(\alpha))=r\ord_t(\alpha)=-\frac{rm}{n}.
$$
By the equality (1) of Lemma~\ref{LM:minimalorderinL2}, 
$$
   \ord_t(P_2(\beta)-P_2(\alpha))=(n-1)\ord_t(\alpha)+\ord_t(\Delta)=-\frac{(n-1)m}{n}+\mu_{z,\calT}.
$$
Since $\calL_2(\calT(z))=0$, 
the inequality (3) of Lemma~\ref{LM:minimalorderinL2} implies that at least two of 
$$
   \ord_t(Q_2-Q_1), \ord_t(P_2(\alpha)-P_1(\alpha)), \ord_t(P_2(\beta)-P_2(\alpha))
$$
equal. In other words, at least one of 
$$
   ns=mr, \,\,n(s+\mu_{z,\calT})=(n-1)m,\,\, n\mu_{z,\calT}=(n-1-r)m
$$
holds. If $P_2=P_1$ then $Q_1\neq Q_2$ and we must have 
$\ord_t(Q_2-Q_1)=\ord_t(P_2(\beta)-P_2(\alpha))$. This implies that $n(s+\mu_{z,\calT})=(n-1)m$. Similarly, if $Q_1=Q_2$ then $P_1\neq P_2$ and we must have 
$(n-1-r)m=n\mu_{z,\calT}$.
\end{proof}

\section{Main results}
For $\calL=a_n\pa^n+a_{n-1}\pa^{n-1}+\dots+a_0 \in R$, 
the Newton polygon at $t=0$ is the lower convex hull of the points
\[
  (j,\ord_t(a_j))
\]
for which $a_j\ne0$.
A slope of the Newton polygon is a rational number $r$ such that there exists a line of slope $r$ 
which intersects the Newton polygon at two distinct points such that all other points lie on or above this line.
\begin{lemma}
  \label{LM:non-resonant-image-gap}
Let $z=\exp^{\int \eta dx}h$ with $\eta,h\in k((t^{1/e}))$ being Puiseux series. Suppose that
 $\ord_t(\delta(z)/z) < 0$.
Let $\calT \in k[x][\pa]\setminus k$. Suppose that the Newton polygon
of $\calT$ does not have a slope equal to $-\ord_t(\delta(z)/z)$.
Then,
\[ \ord_t\left(\frac{\calT(z)}{z}\right) < 0. \]
In particular, $\ord_t\left(\frac{\calT(z)}{z}\right) \ne 0$.
\end{lemma}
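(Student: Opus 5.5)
The plan is to expand $\calT(z)/z$ with the Bell polynomials and show that the valuations of the resulting summands are read off from the points $(j,\ord_t(a_j))$ of the Newton polygon of $\calT$. The non-resonance hypothesis will then force a unique summand of minimal order, so no cancellation can occur.

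Write $\calT=\sum_{j=0}^{d}a_j\pa^j$ with $a_j\in k[x]$, and set $\gamma=\dd(z)/z$ and $g=\ord_t(\gamma)<0$. By Lemma~\ref{LM:Bellformula},
\[
  \frac{\calT(z)}{z}=\sum_{j=0}^{d}a_jB_j(\gamma)=\sum_{j=0}^{d}a_j\bigl(\gamma^j+C_j(\gamma)\bigr).
\]
Since $g<0<1$, Lemma~\ref{LM:orderofBell1} gives $\ord_t(C_j(\gamma))>jg=\ord_t(\gamma^j)$. Hence, for every $j$ with $a_j\neq0$, the summand $a_jB_j(\gamma)$ has exact order $\ord_t(a_j)+jg$, and its leading coefficient is the leading coefficient of $a_j$ times $c^j$, where $c\neq0$ is the leading coefficient of $\gamma$.

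Next I identify where the minimum of $\ord_t(a_j)+jg$ is attained. Put $\sigma=-g>0$. Minimizing $\ord_t(a_j)-\sigma j$ over the points $(j,\ord_t(a_j))$ amounts to finding the lowest line of slope $\sigma$ touching this finite point set. Every point on that line lies on the lower convex hull, which is the Newton polygon of $\calT$. If the minimum were attained at two distinct indices $j$, this line would be a supporting line of slope $\sigma$ meeting the polygon in two distinct points. That would make $\sigma=-\ord_t(\dd(z)/z)$ a slope of the Newton polygon, contrary to the hypothesis. So the minimum is attained at a unique index $j_0$, and by the ultrametric inequality
\[
  \ord_t\bigl(\calT(z)/z\bigr)=\ord_t(a_{j_0})+j_0g .
\]
This careful translation between ``minimum attained twice'' and ``the polygon has slope $-g$'' is the only delicate point, and I will make sure the sign convention matches the paper's definition of slope.

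Finally I show this minimum is negative. Because $a_j\in k[x]$, we have $\ord_t(a_j)=-\deg a_j\le0$. If some $a_j\neq0$ with $j\ge1$, then $\ord_t(a_j)+jg\le jg<0$, so the minimum is negative. Otherwise $\calT=a_0\in k[x]\setminus k$, so $\ord_t(\calT(z)/z)=\ord_t(a_0)=-\deg a_0<0$. In either case $\ord_t(\calT(z)/z)<0$, and in particular it is nonzero.
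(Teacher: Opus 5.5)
Your proof is correct and follows the paper's argument essentially step for step. You expand $\calT(z)/z$ via the Bell polynomials and use Lemma~\ref{LM:orderofBell1} to get the exact orders $-\deg a_j + j\,\ord_t(\dd(z)/z)$. You then use the non-resonance hypothesis to rule out a tie for the minimum, and use $a_j\in k[x]$ together with $\calT\notin k$ to see that the minimum is negative. Your explicit case split for negativity (either some $a_j\neq 0$ with $j\geq 1$, or $\calT=a_0$ of positive degree) is a slightly more careful version of the paper's final sentence.
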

\begin{proof}
Write $\calT = \sum_{j=0}^r p_j(x)\pa^j$, where $r = \ord(\calT)$, and $p_j(x)\in k[x]$ has degree $d_j$.
Denote $\alpha = \frac{\delta(z)}{z}$. 
By Lemma~\ref{LM:Bellformula}, we have
\[ \frac{\calT(z)}{z} = \sum_j^r p_j(x) \left(\alpha^j + C_j(\alpha)\right). \]
where every nonzero term of the correction $C_j(\alpha)$ has order strictly larger than 
$\ord_t(\alpha^j)=j\ord_t(\alpha)$. Therefore, 
\[
  \ord_t(p_j(x)(\alpha^j+C_j(\alpha))) = \ord_t(p_j(x) \alpha^j) = -d_j + j\ord_t(\alpha). 
\]
Set $\ell_j=-d_j + j\ord_t(\alpha).$
Notice that since $\ord_t(\alpha) < 0$, we have $\ell_j \le 0$ for all $j \ge 0$. 
Furthermore, $\ell_j < 0$ strictly whenever $j \ne 0$, and $\ell_0=-d_0 < 0$ when $d_0 \ne 0$.

If the minimum of these $\ell_j$ were achieved at two distinct points $j_1$, and $j_2$, we would have the equality
\[ -d_{j_1}+j_1\ord_t(\alpha) = -d_{j_2}+j_2\ord_t(\alpha). \]
Geometrically, this implies that the segment connecting $(j_1, -d_{j_1})$ and $(j_2, -d_{j_2})$ lies on a 
boundary line of the Newton polygon of $\calT$ corresponding to the slope $-\ord_t(\alpha)$, 
which contradicts the assumption that $-\ord_t(\alpha)$ is not a slope of the Newton polygon of $\calT$.
This guarantees that the minimum order is achieved at a unique term, thus the order of $\frac{\calT(z)}{z}$ equals to this value.

On the other hand, since $\calT \notin k$, at least one value of $\ell_j$ is negative and then the minimum is negative. Thus, we conclude
$\ord_t\left(\frac{\calT(z)}{z}\right)< 0. $
This completes the proof.
\end{proof}

\begin{corollary}
  \label{COR:non-resonant-image-gap}
  Let $z=\exp^{\int \eta dx}h$ with $\eta,h\in k((t^{1/e}))$ being Puiseux series. 
  Suppose that $r=\ord_t\left(\frac{\dd (z)}{z}\right)<0$. For any $\calT \in k[x][\pa]$ of order less than the denominator of $r$, 
  if the quotient $\frac{\calT(z)}{z} \notin k$, then
  \[ \ord_t\left(\frac{\calT(z)}{z}\right) < 0. \]
\end{corollary}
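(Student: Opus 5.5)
The plan is to reduce the corollary to Lemma~\ref{LM:non-resonant-image-gap}. The only thing to check is that $-r$ cannot be a slope of the Newton polygon of $\calT$ when $\ord(\calT)$ is smaller than the denominator of $r$. We first handle the degenerate case. If $\calT\in k$, then $\calT(z)/z$ is a constant, which the hypothesis $\calT(z)/z\notin k$ excludes. So $\calT\in k[x][\pa]\setminus k$, and the remaining hypotheses of Lemma~\ref{LM:non-resonant-image-gap} are $\ord_t(\delta(z)/z)=r<0$ together with the slope condition.

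Write $\calT=\sum_{j=0}^{d}p_j(x)\pa^j$ with $d=\ord(\calT)$, and write $r=-a/b$ in lowest terms with $b\ge 1$ the denominator. The Newton polygon at $t=0$ is built from the points $(j,\ord_t(p_j))=(j,-\deg p_j)$, and all of these have integer coordinates. Any slope of the polygon is the slope of a segment joining two distinct vertices $(j_1,-d_{j_1})$ and $(j_2,-d_{j_2})$ with $0\le j_1<j_2\le d$. It therefore equals $(d_{j_1}-d_{j_2})/(j_2-j_1)$, a rational number whose reduced denominator divides $j_2-j_1\le d$.

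Now suppose $-r=a/b$ were a slope. Since $\gcd(a,b)=1$, the equation $a/b=(d_{j_1}-d_{j_2})/(j_2-j_1)$ forces $b\mid (j_2-j_1)$, so $b\le j_2-j_1\le d$. This contradicts $d<b$. Hence $-r$ is not a slope, and Lemma~\ref{LM:non-resonant-image-gap} gives $\ord_t(\calT(z)/z)<0$.

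The one point needing care is the case $b=1$, where $r$ is an integer. Then the hypothesis $d<1$ forces $d=0$, so $\calT=p_0(x)$ and the Newton polygon has a single point and no slopes. The argument above still applies: $\calT(z)/z=p_0(x)\notin k$ means $\deg p_0\ge 1$, so its order is negative. I expect no real obstacle beyond this bookkeeping. The main thing to confirm is that the slope definition uses two distinct points of the polygon, which are lattice points because they are vertices or data points.
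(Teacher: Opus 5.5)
Your proposal is correct and follows the paper's own route. The paper likewise observes that every slope of the Newton polygon of $\calT$ has reduced denominator at most $\ord(\calT)$ (which you justify via the lattice-point vertices $(j,-\deg p_j)$), so $-r$ is not a slope and Lemma~\ref{LM:non-resonant-image-gap} applies. Your explicit check that $\calT\notin k$, which the lemma requires and the paper leaves implicit, is a useful addition; the separate $b=1$ discussion is unnecessary, since your general argument already covers that case vacuously.
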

\begin{proof}
Every slope in the Newton polygon of $\calT$ is a rational number whose denominator in reduced form is not greater than $\ord(\calT)$.
Because the denominator of $r$ is greater than $\ord(\calT)$, 
$-r$ cannot appear as a slope of $\calT$. The result then directly follows from Lemma~\ref{LM:non-resonant-image-gap}.
\end{proof}

\begin{lemma}
  \label{LM:coefficientsoftransformer}
  Let $\calL_1$ and $\calL_2$ be two generalized Airy operators of type $(n,m)$ with $nm\neq 0$. If 
  $\calL_1$ and $\calL_2$ are equivalent by $\calT\in R$ with $\ord(\calT)<n$ then $\calT\in k[x][\pa]$.  
\end{lemma}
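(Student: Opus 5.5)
The plan is to show that each coefficient of $\calT$ has no pole at any finite point $a\in k$, using local power series solutions of $\calL_1$ at $a$. Write $\calT=\sum_{j=0}^{n-1}t_j\pa^j$ with $t_j\in k(x)$; this is possible since $\ord(\calT)<n$, setting $t_j=0$ for $j>\ord(\calT)$. Since $\lclm(\calT,\calL_1)=\calL_2\calT$ is a left multiple of $\calL_1$, there is $\mathcal S\in R$ with $\calL_2\calT=\mathcal S\calL_1$. Consequently $\calT$ maps every solution of $\calL_1$ in any differential extension of $k(x)$ to a solution of $\calL_2$.

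Fix $a\in k$ and work in $k((x-a))$ with the derivation $d/dx$; it contains $k(x)$ as a differential subfield.

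\textbf{Step 1: power series solutions.} Both $\calL_i=P_i(\pa)+Q_i(x)$ are monic of order $n$ with polynomial coefficients. The usual coefficient recursion therefore shows that each $\calL_i$ has $n$ solutions $y_1,\dots,y_n\in k[[x-a]]$ with prescribed initial data $y_i^{(j)}(a)=\delta_{ij}$ for $0\le j\le n-1$. These are linearly independent over $k$. Since the solution space of $\calL_2$ in the field $k((x-a))$, whose constants are $k$, has dimension at most $n$ over $k$, every Laurent series solution of $\calL_2$ lies in $k[[x-a]]$.

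\textbf{Step 2: the images are power series.} Let $y_1,\dots,y_n$ be the power series solutions of $\calL_1$ from Step 1. Each $\calT(y_i)\in k((x-a))$ solves $\calL_2$, so by Step 1 it lies in $k[[x-a]]$.

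\textbf{Step 3: solve for the coefficients.} Consider the linear system
\[
\sum_{j=0}^{n-1} t_j\, y_i^{(j)}=\calT(y_i),\qquad 1\le i\le n,
\]
with unknowns $t_0,\dots,t_{n-1}$. Its matrix is the Wronskian matrix $(y_i^{(j)})$, which has entries in $k[[x-a]]$. Its determinant $W$ satisfies $W'=-p_{n-1}W$, where $p_{n-1}\in k$ is the coefficient of $Y^{n-1}$ in $P_1$, and $W(a)=1$. Hence $W$ is a unit of $k[[x-a]]$ (explicitly $W=e^{-p_{n-1}(x-a)}$). By Cramer's rule, every $t_j\in k[[x-a]]$.

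\textbf{Step 4: conclude.} Each $t_j$ is a rational function with no pole at any $a\in k$. Since $k$ is algebraically closed, $t_j\in k[x]$, and so $\calT\in k[x][\pa]$.

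The main point needing care is Step 1: that $\calT(y_i)$, a priori only a Laurent series because $\calT$ may have poles, is actually a power series. The dimension count handles this directly, since a monic operator with polynomial coefficients has only ordinary points in $k$. An alternative is that the indicial roots at an ordinary point are $0,\dots,n-1$. The hypothesis $\ord(\calT)<n$ is used exactly to make the system in Step 3 square, with the $n$ unknowns $t_0,\dots,t_{n-1}$ determined by the $n$ equations.
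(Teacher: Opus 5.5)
Your proposal is correct and is essentially the paper's argument: at each $a\in k$ you take power-series solutions of $\calL_1$ whose Wronskian matrix is invertible over $k[[x-a]]$, note that the images $\calT(y_i)$ are power-series solutions of $\calL_2$, and recover the coefficients of $\calT$ by Cramer's rule; your dimension count supplies the justification that the paper simply calls ``well-known''. One harmless slip: that $W$ is a unit already follows from $W(a)=1$, whereas Abel's formula $W'=-p_{n-1}W$ with constant $p_{n-1}$ only holds for $n\ge 2$ (for $n=1$ the relevant coefficient is $P_1(0)+Q_1(x)$).
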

\begin{proof}
Suppose that $\calL\in k[x][\pa]$ and $\calL$ is monic. It is well-known that for each $c\in k$ if $f(x)\in k((x-c))$ is a solution of $\calL(y)=0$ 
then $f(x)\in k[[x-c]]$, moreover, $\calL(y)=0$ has a basis $f_1(x),\dots,f_n(x)$ of solutions in $k[[x-c]]$ of the form
$$
    f_i(x)=(x-c)^{i-1} + \text{higher order terms}, \quad 1 \leq i \leq n.
$$
Assume that $\calT=a_{n-1}\pa^{n-1}+\dots+a_0$. Then 
$$
  (\calT(f_1),\dots,\calT(f_n)) =(a_0,\dots,a_{n-1})\underbrace{\begin{pmatrix} f_1 & f_2 & \dots & f_n \\
    \delta(f_1) & \delta(f_2) & \dots & \delta(f_n) \\
    \vdots & \vdots & \ddots & \vdots \\
    \delta^{n-1}(f_1) & \delta^{n-1}(f_2) & \dots & \delta^{n-1}(f_n) 
  \end{pmatrix} }_{W_f(x)}.
$$
It is easy to see $\det(W_f(c))\neq 0$. Hence $W_f(x)^{-1}\in \GL_n(k[[x-c]])$. On the other hand, 
 $\calT(f_i)$ is a solution of $\calL_2(y)=0$, we have $\calT(f_i)\in k[[x-c]]$. Therefore, 
 $$(a_0,\dots,a_{n-1}) = (\calT(f_1),\dots,\calT(f_n)) W_f(x)^{-1} \in k[[x-c]]^n.$$
Since $c$ is arbitrary, we have $a_i\in k[x]$ for all $0\leq i \leq n-1$.
\end{proof}

\begin{theorem}
  \label{THM:Katzproblem}
  Let $\calL_1$ and $\calL_2$ be two generalized Airy operators of type $(n, m)$ with $nm\neq 0$ 
  and $\gcd(n,m)=1$. If $\calL_1$ and $\calL_2$ are equivalent over $k(x)$. Then $\calL_1 = \calL_2$.
\end{theorem}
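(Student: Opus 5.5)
Suppose $\calL_1\sim\calL_2$ via $\calT$ and, for contradiction, $\calL_1\neq\calL_2$. First I reduce $\calT$ modulo $\calL_1$ on the right, so that $\ord(\calT)<n$. This is legitimate because $\calT$ and $\calT+A\calL_1$ define the same map on the solution space of $\calL_1$. Lemma~\ref{LM:coefficientsoftransformer} then gives $\calT\in k[x][\pa]$. Since $\gcrd(\calT,\calL_1)=1$, the map $z\mapsto\calT(z)$ is injective on solutions of $\calL_1$, and its image lies in the solution space of $\calL_2$.

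Take a formal solution $z=\exp(\int\eta\,dx)h$ as in Theorem~\ref{THM:basisofsolutions}. Then $\alpha=\dd(z)/z$ has $\ord_t(\alpha)=-m/n$, and since $\gcd(n,m)=1$ the denominator of this order is exactly $n>\ord(\calT)$.

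\textbf{Case 1: $\calT(z)/z\in k$ for some such $z$.} Write $\calT(z)=cz$ with $c\neq0$. Then $\calL_2(z)=0$ as well. Because all the $\eta_i$ are Galois-conjugate under $t^{1/n}\mapsto\zeta t^{1/n}$ ($e=n$), the relation transfers to all conjugates. Alternatively one argues directly: the element $\calT-c$ annihilates $z$, so it is a nonzero operator of order $<n$ unless $\calT=c$. But $\calL_1$ is irreducible here, because its Galois group is large (Katz) or, more elementarily, because its Newton polygon at $\infty$ has a single slope $m/n$ with $\gcd=1$. Hence no nonzero operator of order $<n$ kills a solution, so $\calT=c$. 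Then $\calL_2\calT=\lclm(c,\calL_1)=\calL_1 c$ up to normalization, forcing $\calL_1=\calL_2$, a contradiction. I expect to need exactly this irreducibility claim; I would prove it via the formal decomposition at $\infty$ (all exponentials are conjugate, so any $k((t))$-rational factor must contain all of them).

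\textbf{Case 2: $\calT(z)/z\notin k$.} Corollary~\ref{COR:non-resonant-image-gap} gives $\ord_t(\calT(z)/z)<0$, in particular nonzero. Lemma~\ref{LM:logderivative} then gives $\mu_{z,\calT}=\ord_t(\dd(\calT(z)/z)/(\calT(z)/z))=1$. Now apply Theorem~\ref{THM:degreeobstruction} with $r=\deg(P_1-P_2)<n$ (both are monic) and $s=\deg(Q_1-Q_2)\le m$.
\begin{itemize}
\item If $ns=rm$: since $\gcd(n,m)=1$, $n\mid r$, so $r=0$ and then $s=0$ (or both are $-\infty$). But $Q_i(0)=0$ forces $Q_1-Q_2$ to have degree $\ge1$ if nonzero, so $Q_1=Q_2$. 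If both $P$ and $Q$ differences are zero we contradict $\calL_1\neq\calL_2$; the remaining constant case $r=0$, $s=-\infty$ requires care and is handled by the ``moreover'' clause.
\item If $n(s+1)=(n-1)m$: then $n\mid m$, impossible unless $n=1$. The case $n=1$ is trivial, since equivalent first-order operators $\pa+Q_i$ with $Q_i$ polynomial differ by a logarithmic derivative of a rational function, which must vanish by degree.
\item If $n=(n-1-r)m$: again $n\mid(n-1-r)$ with $0\le n-1-r<n$, forcing $n-1-r=0$ and hence $n=0$. This is impossible.
\end{itemize}
The degenerate subcases are handled using the ``moreover'' clauses. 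In particular, if $Q_1=Q_2$ the third equation must hold, which is impossible; and if $P_1=P_2$ the second must hold, which is impossible for $n>1$. Thus $\calL_1=\calL_2$.

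The main obstacle is Case 1, namely showing that $\calT$ being a scalar on a solution forces a genuine scalar. I would first try the irreducibility of $\calL_1$ over $k(x)$ from the single-slope Newton polygon.
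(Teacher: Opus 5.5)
Your Case 2 is the paper's argument. You reduce $\calT$ to order $<n$, get $\calT\in k[x][\pa]$ from Lemma~\ref{LM:coefficientsoftransformer}, obtain $\mu_{z,\calT}=1$ from Corollary~\ref{COR:non-resonant-image-gap} and Lemma~\ref{LM:logderivative}, and rule out the three equalities of Theorem~\ref{THM:degreeobstruction} by divisibility. Two small remarks on the bullets. In the first, $s=0$ is already impossible because $(Q_1-Q_2)(0)=0$, so no detour through $Q_1=Q_2$ is needed. In the second, your log-derivative argument for $n=1$ is fine but unnecessary, since $s+1=0$ cannot hold.

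The real difference is the case split, and it is what creates your ``main obstacle.'' You split on whether \emph{some} formal solution $z$ satisfies $\calT(z)/z\in k$. That forces you to show $\calT$ is then a scalar, which needs irreducibility of $\calL_1$ over $k(x)$. Irreducibility is true. It follows from Katz, or from integrality of the Newton polygon: a right factor over $k((t))$ of order $d<n$ would carry the single slope, of denominator $n$, over horizontal length $d$, forcing $n\mid d$. It also follows from your Galois remark: $t^{1/(2n)}\mapsto\zeta t^{1/(2n)}$ commutes with $\dd$ and fixes $k[x]$, so it carries $\calT(z)=cz$ to $n$ conjugate solutions, which are independent by Lemma~\ref{LM:linearlyindependent} because $\gcd(n,m)=1$ makes the action on the $\eta_i$ transitive. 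But you only sketch these, and the paper never needs them.

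The paper instead splits on whether \emph{every} basis element $z_i$ of Theorem~\ref{THM:basisofsolutions} satisfies $\calT(z_i)/z_i\in k$. If so, then $\calT(z_i)=c_iz_i$ with $c_i\neq 0$ by injectivity, so each $z_i$ solves $\calL_2$. Two monic operators of order $n$ sharing an $n$-dimensional solution space are equal. Otherwise some $z_\rho$ has $\calT(z_\rho)/z_\rho\notin k$, and your Case 2 applies to it verbatim. With this split your proof is complete and needs no irreducibility; with your split it is complete only once one of your irreducibility sketches is carried out.
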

\begin{proof}
  Write $\calL_1=P_1(\partial)+Q_1(x)$. By Theorem~\ref{THM:basisofsolutions},
  the solution space of $\calL_1(y)=0$ has a basis 
  $$
    z_1=\exp^{\int \eta_1 dx }h_1, \dots, z_n=\exp^{\int \eta_n dx} h_n
  $$
  where $\eta_i ,h_i\in k((t^{1/(2n)}))$ and $P_1(\eta_i)+Q_1(x)=0$.
  Suppose that $\calL_1$ and $\calL_2$ are equivalent over $k(x)$ by $\calT$. Then 
  we may choose $\calT$ to be of order less than $n$. By Lemma~\ref{LM:coefficientsoftransformer}, 
  $\calT\in k[x][\pa]$. Note that $\calT$ induces a linear isomorphism from 
  the solution space of $\calL_1(y)=0$ to that of $\calL_2(y)=0$. In particular, 
  $\calT(z_i)$ is a solution of $\calL_2(y)=0$ for all $1\leq i \leq n$ and $\calT(z_i)\neq 0$.

  Assume that $\calT(z_i)/z_i\in k$ for all $1\leq i \leq n$. Then $\calL_1$ and $\calL_2$
  have the same solution space and therefore $\calL_1=\calL_2$. The proof is complete. 
  
  Suppose that there exists $z_\rho$ with $1\leq \rho \leq n$ such that $\calT(z_\rho)/z_\rho \notin k$. Denote 
  $$
   \alpha=\frac{\delta(z_\rho)}{z_\rho}=\eta_\rho+\frac{\delta(h_\rho)}{h_\rho}.
  $$
  Hence $\ord_t(\alpha)=\ord_t(\eta_\rho)=-m/n<0$.
  Since $\gcd(n,m)=1$, the denominator of $\ord_t(\alpha)$ is $n$. By Corollary~\ref{COR:non-resonant-image-gap},
  $\ord_t(\calT(z_\rho)/z_\rho)\neq 0$. Due to Lemma~\ref{LM:logderivative}, we have $\mu_{z_\rho,\calT} = 1$.
We claim that none of the equalities
\begin{equation}
\label{EQ:threeequalities}
ns=rm,\qquad n(s+1)=(n-1)m,\qquad
n=(n-1-r)m
\end{equation}
can hold.
Suppose first that $ns=rm$. Since $\gcd(n,m)=1$, it follows that $n\mid r$. On the other hand, both $\calL_1$ and $\calL_2$ are monic, so $r<n$. Hence $r=0$, which implies $s=0$. This contradicts the assumption that $Q_1(0)=Q_2(0)$, from which $s\neq 0$.
Next, suppose that
$
n(s+1)=(n-1)m.
$
Again, since $\gcd(n,m)=1$, we have $n\mid (n-1)$. Hence $n=1$. Substituting this into the above equality yields $s+1=0$, contradicting the fact that $s>0$.
Finally, suppose that
$
n=(n-1-r)m.
$
Since $\gcd(n,m)=1$, it follows that
$
n\mid (n-1-r).
$
As $0\le n-1-r<n$, we must have $n-1-r=0$. Consequently,
$
n=(n-1-r)m=0,
$
which is impossible. This proves the claim. 
By Theorem~\ref{THM:degreeobstruction}, if $\calL_1 \neq \calL_2$, then  one of the three equalities in (\ref{EQ:threeequalities}) must hold.
We have shown that each of them is impossible. Hence $\calL_1=\calL_2$.
\end{proof}

For $\calL=a_n\pa^n+\dots+a_0\in R$, the adjoint of $\calL$ is defined to be 
$$
    \calL^*=\sum_{i=0}^n (-1)^{n-i} \pa^i a_i.
$$
\begin{definition}
  Let $\calL\in R$ be a generalized Airy operator. We say $\calL$ is self-dual if $\calL$ is equivalent over $k(x)$ to $\calL^*$.
\end{definition}
One sees that the adjoint of a generalized Airy operator is still a generalized Airy operator of the same type.
\begin{theorem}
  \label{THM:self-dual}
 Let $\calL=P(\partial)+Q(x)$ be a generalized Airy operator of type $(n,m)$ and $nm\neq 0$. Suppose that $\gcd(n,m)=1$. Then 
 $\calL$ is self-dual if and only if $n$ is even and $P(Y)=S(Y^2)$ for some $S\in k[Y]$.
\end{theorem}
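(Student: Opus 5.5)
The plan is to reduce self-duality to the equality $\calL=\calL^*$ via Theorem~\ref{THM:Katzproblem}, and then read off the condition on $P$ by computing $\calL^*$ explicitly.

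\textbf{Step 1: compute the adjoint.} Since $\calL^*$ is again a generalized Airy operator of type $(n,m)$ with $\gcd(n,m)=1$, Theorem~\ref{THM:Katzproblem} shows that $\calL$ is self-dual if and only if $\calL=\calL^*$. Write $P(Y)=\sum_{i=0}^n p_iY^i$ with $p_n=1$. The coefficients $p_i$ are constants, so $\pa^i p_i=p_i\pa^i$. The zeroth-order term $Q(x)$ contributes $(-1)^nQ(x)$, since $\pa^0 Q=Q$. Hence
\[
  \calL^*=\sum_{i=0}^n(-1)^{n-i}p_i\pa^i+(-1)^nQ(x)=(-1)^n\bigl(P(-\pa)+Q(x)\bigr).
\]
The factor $(-1)^n$ is needed because $P(-Y)$ has leading coefficient $(-1)^n$. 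Thus $\calL^*$ is monic with polynomial part $(-1)^nP(-Y)$ and potential $(-1)^nQ(x)$.

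\textbf{Step 2: compare coefficients.} The equation $\calL=\calL^*$ holds exactly when
\[
  P(Y)=(-1)^nP(-Y)\quad\text{and}\quad Q=(-1)^nQ.
\]
If $n$ is odd, the second condition forces $Q=0$, which contradicts $\deg Q=m>0$. So $n$ must be even. When $n$ is even, the conditions reduce to $P(Y)=P(-Y)$, that is, $P$ is an even polynomial, which means $P(Y)=S(Y^2)$ for some $S\in k[Y]$. Conversely, if $n$ is even and $P=S(Y^2)$, the formula in Step 1 gives $\calL^*=\calL$ directly. In that case $\calL$ is trivially equivalent to $\calL^*$, for instance by taking $\calT=1$: then $\gcrd(1,\calL)=1$ and $\lclm(1,\calL)=\calL=\calL^*\cdot 1$.

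\textbf{Main obstacle.} The only delicate point is the sign and normalization bookkeeping in the adjoint. The paper's definition of $\calL^*$ carries the factor $(-1)^{n-i}$, which makes it monic, and the check that $\calL^*$ has type $(n,m)$ with $Q^*(0)=0$ follows from this. Everything substantive is supplied by Theorem~\ref{THM:Katzproblem}.
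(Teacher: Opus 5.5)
Your proof is correct and follows the paper's route. Theorem~\ref{THM:Katzproblem} reduces self-duality to $\calL=\calL^*$, the zeroth-order coefficient rules out odd $n$, and comparing the remaining coefficients forces $P$ to be even; your closed form $\calL^*=(-1)^n\bigl(P(-\pa)+Q(x)\bigr)$ (where splitting the constant term $p_0+Q(x)$ into $p_0$ and $Q$ uses $Q(0)=0$) and the explicit choice $\calT=1$ for the converse just spell out what the paper calls a direct calculation.
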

\begin{proof}
  Suppose that $\calL$ is self-dual, i.e. $\calL$ is equivalent over $k(x)$ to $\calL^*$. By Theorem~\ref{THM:Katzproblem},
  $\calL=\calL^*$. If $n$ is odd then $Q(x)=(-1)^nQ(x)$ and so $Q(x)=0$, a contradiction. Hence $n$ is even. Write 
  $$
     P(Y)=c_{2\ell}Y^{2\ell}+c_{2\ell-1}Y^{2\ell-1}+\dots+c_0.
  $$
  Then $P(\pa)^*=c_n\pa^n-c_{n-1}\pa^{n-1}+\dots+c_0$. As $\calL=\calL^*$, $P(\pa)^*=P(\pa)$ and thus $c_{2i+1}=0$ for all $0\leq i \leq \ell-1$.
  Therefore $P(Y)=S(Y^2)$ where $S=\sum_{i=0}^\ell c_{2i}Y$.

  Assume that $n$ is even and $P(Y)=S(Y^2)$ for some $S\in k[Y]$. Direct calculation implies that $\calL=\calL^*$ and so $\calL$ is self-dual.
\end{proof}
\bibliographystyle{alpha}
\bibliography{bibdata}
\end{document}